
\documentclass[11pt,reqno,tbtags,a4paper]{amsart}
\usepackage{amssymb}
\usepackage{url}
\usepackage[square,numbers]{natbib}
\bibpunct[, ]{[}{]}{;}{n}{,}{,}

\title[Hidden Words]
{Hidden Words Statistics for Large Patterns}

\date{20 March 2020}

\author{Svante Janson}
\address{Department of Mathematics, Uppsala University, PO Box 480,
SE-751~06 Uppsala, Sweden}
\email{svante.janson@math.uu.se}
\urladdr{http://www.math.uu.se/svante-janson}

\author{Wojciech Szpankowski}
\address{Center for Science of Information, Department of Computer Science, 
Purdue University, West Lafayette, IN, USA}
\email{spa@cs.purdue.edu}
\urladdr{http://www.cs.purdue.edu/homes/spa}
\thanks{The first author was supported by the Knut and Alice Wallenberg
  Foundation.
The second author was supported by
NSF Center for Science of Information (CSoI) Grant CCF-0939370,
and in addition by NSF Grant CCF-1524312.}

\keywords{Hidden pattern matching, subsequences, probability, U-statistics, projection method}

\overfullrule 0pt 


\numberwithin{equation}{section}

\renewcommand\le{\leqslant}
\renewcommand\ge{\geqslant}
\renewcommand\leq{\leqslant}

\allowdisplaybreaks



\theoremstyle{plain}
\newtheorem{theorem}{Theorem}[section]
\newtheorem{lemma}[theorem]{Lemma}

\newtheorem{corollary}[theorem]{Corollary}
\newtheorem{conjecture}[theorem]{Conjecture}

\theoremstyle{definition}

\newtheorem{exampleqqq}[theorem]{Example}
\newenvironment{example}{\begin{exampleqqq}}
  {\hfill\qedsymbol\end{exampleqqq}}

\newtheorem{remarkqqq}[theorem]{Remark}
\newenvironment{remark}{\begin{remarkqqq}}
  {\hfill\qedsymbol\end{remarkqqq}}



\newenvironment{romenumerate}[1][-10pt]{
\addtolength{\leftmargini}{#1}\begin{enumerate}
 }{\end{enumerate}}

\newcounter{oldenumi}
{\setcounter{oldenumi}{\value{enumi}}
\begin{romenumerate} \setcounter{enumi}{\value{oldenumi}}}
{\end{romenumerate}}

\newcounter{thmenumerate}

\newcounter{xenumerate}   


\newcommand\pfitemx[1]{\par#1:}
\newcommand\pfitemref[1]{\pfitemx{\ref{#1}}}


\newcommand{\refT}[1]{Theorem~\ref{#1}}

\newcommand{\refL}[1]{Lemma~\ref{#1}}

\newcommand{\refR}[1]{Remark~\ref{#1}}

\newcommand{\refS}[1]{Section~\ref{#1}}

\newcommand{\refSS}[1]{Section~\ref{#1}}

\newcommand{\refE}[1]{Example~\ref{#1}}

\newcommand{\refConj}[1]{Conjecture~\ref{#1}}

\newcommand\marginal[1]{\marginpar[\raggedleft\tiny #1]{\raggedright\tiny#1}}

\newcommand\REM[1]{{\raggedright\texttt{[#1]}\par\marginal{XXX}}}
\newcommand\XREM[1]{\relax}

\begingroup
  \count255=\time
  \divide\count255 by 60
  \count1=\count255
  \multiply\count255 by -60
  \advance\count255 by \time
  \ifnum \count255 < 10 \xdef\klockan{\the\count1.0\the\count255}
  \else\xdef\klockan{\the\count1.\the\count255}\fi
\endgroup




\newcommand{\sumin}{\sum_{i=1}^n}

\newcommand{\sumkn}{\sum_{k=1}^n}

\newcommand{\sumjm}{\sum_{j=1}^m}
\newcommand{\sumkm}{\sum_{k=1}^m}

\newcommand{\prodjm}{\prod_{j=1}^m}

\newcommand\set[1]{\ensuremath{\{#1\}}}

\newcommand\xpar[1]{(#1)}
\newcommand\bigpar[1]{\bigl(#1\bigr)}
\newcommand\Bigpar[1]{\Bigl(#1\Bigr)}
\newcommand\biggpar[1]{\biggl(#1\biggr)}

\newcommand\bigsqpar[1]{\bigl[#1\bigr]}

\newcommand\xcpar[1]{\{#1\}}

\newcommand\abs[1]{|#1|}
\newcommand\bigabs[1]{\bigl|#1\bigr|}
\newcommand\Bigabs[1]{\Bigl|#1\Bigr|}

\def\rompar(#1){\textup(#1\textup)}    

\newcommand\Bigparfrac[2]{\Bigpar{\frac{#1}{#2}}}

\def\xexp(#1){e^{#1}}
\newcommand\ceil[1]{\lceil#1\rceil}
\newcommand\lrceil[1]{\left\lceil#1\right\rceil}

\newcommand\nn{[n]}
\newcommand\mm{[m]}
\newcommand\ntoo{\ensuremath{{n\to\infty}}}

\newcommand\mtoo{\ensuremath{{m\to\infty}}}

\newcommand\norm[1]{\|#1\|}
\newcommand\bignorm[1]{\bigl\|#1\bigr\|}
\newcommand\Bignorm[1]{\Bigl\|#1\Bigr\|}

\newcommand\Bignormm[1]{\Bignorm{#1}_2}

\newcommand\punkt{.\spacefactor=1000}    
\newcommand\iid{i.i.d\punkt}    
\newcommand\ie{i.e\punkt}

\newcommand\cf{cf\punkt}


\newcommand{\tend}{\longrightarrow}
\newcommand\dto{\overset{\mathrm{d}}{\tend}}
\newcommand\pto{\overset{\mathrm{p}}{\tend}}

\newcommand\eqd{\overset{\mathrm{d}}{=}}

\newcommand\op{o_{\mathrm p}}
\newcommand\Op{O_{\mathrm p}}

\newcommand\bbR{\mathbb R}

\newcounter{CC}
\newcounter{cc}

\renewcommand\Re{\operatorname{Re}}

\newcommand\E{\operatorname{\mathbb E{}}}
\renewcommand\P{\operatorname{\mathbb P{}}}

\newcommand\Var{\operatorname{Var}}
\newcommand\Cov{\operatorname{Cov}}

\newcommand\Bin{\operatorname{Bin}}
\newcommand\Be{\operatorname{Be}}

\newcommand\ga{\alpha}
\newcommand\gb{\beta}
\newcommand\gd{\delta}

\newcommand\gf{\varphi}
\newcommand\gam{\gamma}
\newcommand\gamm{\gamma^2}
\newcommand\gG{\Gamma}

\newcommand\go{\omega}
\newcommand\gO{\Omega}
\newcommand\gs{\sigma}

\newcommand\gss{\sigma^2}

\renewcommand\phi{\xxx}  

\newcommand\cA{\mathcal A}

\newcommand\cF{\mathcal F}

\newcommand\cS{{\mathcal S}}

\newcommand\cV{\mathcal V}

\newcommand\indic[1]{\boldsymbol1\xcpar{#1}}

\newcommand\qw{^{-1}}

\newcommand\qq{^{1/2}}
\newcommand\qqw{^{-1/2}}
\newcommand\qqq{^{1/3}}

\newcommand\oi{\ensuremath{[0,1]}}

\newcommand\setoi{\set{0,1}}

\newcommand\dd{\,\mathrm{d}}

\newcommand\Z{Z}
\newcommand\Zx{\Z^*}
\newcommand\Na{N_a}
\newcommand\pa{p_a}
\newcommand\px{p_*}
\newcommand\gxx{\widehat\gs^2}
\newcommand\tauu{\tau^2}
\newcommand\gsw{\gs_1}
\newcommand\gssw{\gsw^2}
\newcommand\cbg{c(\gb,\gam)}

\newcommand\VV[1]{V_{1,#1}}
\newcommand\VVi{\VV{i}}
\newcommand\VVxi{\VV{i}'}
\newcommand\vektor{\mathbf}
\newcommand\vp{\vektor{p}}
\newcommand\vq{\vektor{q}}
\newcommand\vr{\vektor{r}}
\newcommand\vx{\vektor{x}}
\newcommand\sumaa{\sum_{a\in\cA}}
\newcommand\xin{\xi_1\dotsm\xi_n}
\newcommand\ASN{\operatorname{AsN}}
\newcommand\norme{\norm}
\newcommand\HG{\operatorname{HGe}}


\newcommand{\Polya}{P\'olya}
\newcommand\CS{Cauchy--Schwarz}
\newcommand\CSineq{\CS{} inequality}

\hyphenation{Upp-sala}

\begin{document}

\begin{abstract} 
We study here the so called {\it subsequence pattern matching} also known
as {\it hidden pattern matching} in which one searches
for a given pattern $w$ of length $m$  as a
{\em subsequence\/} in a random text of length $n$.  
The quantity of interest is the number of occurrences of $w$ as
a subsequence (i.e., occurring in {\it not} necessarily
consecutive text locations).
This problem finds many applications from intrusion detection,
to trace reconstruction, to deletion channel, and to DNA-based storage systems.
In all of these applications, the pattern $w$ is of variable length.
To the best of our knowledge this problem was only tackled 
for a fixed length $m=O(1)$ \cite{fsv06}. In our main result
Theorem~\ref{T2a} we prove that 
for $m=o(n^{1/3})$ the number of subsequence occurrences is normally
distributed. In addition, in Theorem~\ref{Tka} we show that under some constraints on 
the structure of $w$ the asymptotic normality can be extended to
$m=o(\sqrt{n})$. For a special pattern $w$ consisting of the same symbol,
we indicate that for $m=o(n)$ the distribution of number of subsequences is
either asymptotically normal or
asymptotically log normal.  
We conjecture that this dichotomy is true for all patterns.
We use Hoeffding's projection method for $U$-statistics to prove our findings.
\end{abstract}

\maketitle

\section{Introduction and Motivation}\label{S:intro}

One of the most interesting and least studied problem in pattern matching is
known as the {\it subsequence string matching} or the {\it hidden pattern matching}
\cite{js-book}. In this case, we search for a pattern $w=w_1w_2\cdots w_m$ of
length $m$ in the text $\Xi^n=\xi_1\dots \xi_n$ of length $n$ as {\it subsequence},
that is, we are looking for indices $1\leq i_1 < i_2 < \cdots < i_m \leq n$ such that
$\xi_{i_1}=w_1, \xi_{i_2}=w_2, \ldots, \xi_{i_m}=w_m$. We say that
$w$ is {\it hidden} in the text $\Xi^n$. We do not put any constraints on the 
gaps $i_{j+1}-i_j$, so in language of \cite{fsv06} this is known as
the {\it unconstrained} hidden pattern matching. The most interesting quantity of
such a problem is the number of subsequence occurrences in the text
generated by a random source. In this paper, we study the
limiting distribution of this quantity when $m$, 
the length of the pattern, grows with $n$.

Hereafter, we assume that a memoryless source generates the text $\Xi$, that is,
all symbols are generated independently with probability $p_a$ for symbol
$a\in \cA$, where the alphabet $\cA$ is assumed to be finite.
We denote by $p_w=\prod_j p_{w_j}$ the probability of the pattern $w$.
Our goal is to understand the probabilistic behavior, in particular,
the limiting distribution of the number of subsequence occurrences
that we denote by $Z:=Z_\Xi(w)$. It is known that the behavior of $Z$
depends on the order of magnitude of the pattern length $m$. 
For example, for the {\it exact pattern matching} (i.e., the pattern $w$ must
occur as a {\it string} in consecutive positions of the text), the limiting
distribution 
is normal for $m=O(1)$ (more precisely, when $n p_w\to\infty$,
hence up to $m=O(\log n)$),
but it becomes a \Polya--Aeppli distribution when $n p_w \to \lambda>0$
for some constant $\lambda$, and finally (conditioned on being non-zero)
it turns into a geometric
distribution when $np_w \to 0$  \cite{js-book}
(see also \cite{BeKo93}).  
We might expect a similar behaviour
for the subsequence pattern matching. In \cite{fsv06} it was proved 
by analytic combinatoric methods that
the number of subsequence occurrences, $Z_\Xi(w)$, is asymptotically normal
when $m=O(1)$, and not much is known beyond this regime. 
(See also \cite{BoVa02}.
Asymptotic normality for fixed $m$ follows also by
general results for $U$-statistics \cite{Hoeffding}.)
However, in many applications  -- as discussed below -- 
we need to consider patterns $w$ whose lengths  grow with $n$.
In this paper, we prove two main results.  
In Theorem~\ref{T2a} we establish that
for $m=o(n^{1/3})$ the number of subsequence occurrences is normally
distributed. Furthermore, in Theorem~\ref{Tka} we show that
under some constrains on the structure of $w$, 
the asymptotic normality can be extended to $m=o(\sqrt{n})$. 
Moreover,
for the special pattern $w=a^m$ consisting of the same symbol repeated,
we show in \refT{TLN} that for 
$m=o(\sqrt{n})$,
the distribution of number of occurrences is asymptotically normal,
while for larger $m$ (up to $cn$ for some $c>0$)
it is asymptotically log-normal. We conjecture that this 
dichotomy  is true for a large class of patterns.
Finally, for random typical $w$ we establish in Corollary~\ref{cor-random} that
$Z$ is asymptotically normal for $m=o(n^{2/5})$.

Regarding methodology, unlike \cite{fsv06} we use here probabilistic tools.
We first observe that $Z$ can be represented as a $U$-statistic
(see (\ref{eq-u}) and \refSS{Sdec}).
This suggests to apply the \citet{Hoeffding}
projection method to prove asymptotic normality of $Z$ for some large patterns.
Indeed, we first decompose $Z$ into a sum of orthogonal random variables 
with variances of decreasing order in $n$ (for $m$ not too large),
and show that the variable of
the largest variance converges to a normal distribution, proving our 
main results Theorems~\ref{T2a} and \ref{Tka}.


The hidden pattern matching problem, especially for large patterns,
finds many applications from intrusion detection,
to trace reconstruction, to deletion channel, to DNA-based storage systems
\cite{ags03,olgica19,dsv12,dg06,js-book,Mitz}. Here we discuss below in some
detail
two of them, namely the deletion channel and the trace reconstruction problem.

A deletion channel \cite{dsv12,dg06,dobrushin67,kms10,Mitz,vtr11}
with parameter $d$ takes a binary sequence
$\Xi^n=\xi_1\cdots \xi_n$ where $\xi_i \in \cA$ as input and
deletes each symbol in the sequence independently with probability $d$.
The output of such a channel is then a \emph{subsequence}
$\zeta = \zeta(x) = \xi_{i_1}...\xi_{i_M}$ of $\Xi$, where $M$ follows the binomial
distribution ${\rm Binom}(n,(1-d))$, and the indices
$i_1, ..., i_M$ correspond to the bits that are {\it not} deleted.
Despite significant effort \cite{dg06,kms10,KanMont,Mitz,vtr11}
the mutual information between the input and output of the deletion
channel and its capacity are still unknown. 
However, it turns out that the mutual information
$I(\Xi^n;\zeta(\Xi^n))$ can be exactly formulated as the problem of the
subsequence pattern matching. In \cite{dsv12} it was proved that 
\begin{align}
I(\Xi^n;\!\zeta(\Xi^n)) \!=\!\! \sum_w d^{n-|w|}\!(1-d)^{|w|}\!
\bigl(& \mathbb{E}[Z_{\Xi^n}(w) \!\log Z_{\Xi^n}\!(w)]
\notag\\ &
-  \mathbb{E}[Z_{\Xi^n}(w)] \log \mathbb{E}[Z_{\Xi^n}(w)] \bigr),
\end{align}
where the sum is over all binary sequences of length smaller than $n$ and
$Z_{\Xi^n}(w)$ is the number of subsequence occurrences of $w$ in the text $\Xi^n$.
As one can see, to find precise asymptotics of the mutual information we 
need to understand the probabilistic behavior of $Z$ for $m\le n$ and 
typical $w$.
The trace reconstruction problem \cite{olgica19,holden,mcgregor14,peres17}
is related to the deletion channel problem since
we are asking how many copies of the output deletion channel we need to 
see until we can reconstruct the input sequence with high probability.

\section{Main Results}
\label{sec-main}

In this section we formulate precisely our problem and present our
main results. Proofs are delayed till the next section.

\subsection{Problem formulation and notation}\label{Snot}

We consider a random string $\Xi^n=\xi_1\dots \xi_n$ of length $n$.  
We assume that $\xi_1,\xi_2,\dots$ are \iid{} random letters from a finite
alphabet $\cA$; each letter $\xi_i$ has the distribution
\begin{equation}
  \P(\xi_i=a)=p_a, \qquad a\in\cA,
\end{equation}
for some given vector $\vp=(p_a)_{a\in\cA}$; we assume $p_a>0$ for each $a$.
We may also use $\xi$ for a random letter with this distribution.

Let $w=w_1\dotsm w_m$ be a fixed string of length $m$ over the same
alphabet $\cA$. We assume $n\ge m$. 
Let
\begin{equation}
  p_w:=\prodjm p_{w_j},
\end{equation}
which is the probability that $\xi_1\dotsm \xi_m$ equals $w$.

Let $\Z=\Z_{n,w}(\xi_1\dotsm \xi_n)$ 
be the number of occurrences of $w$ as a subsequence of $\xi_1\dotsm \xi_n$.

For a set $\cS$ (in our case $\nn$ or $\mm$) and $k\ge0$,
let $\binom{\cS}k$ be the collection of sets $\ga\subseteq \cS$ with $|\ga|=k$.
Thus, $\bigabs{\binom{\cS}k}=\binom{|\cS|}{k}$.
For $k=0$, $\binom{\cS}{0}$ contains just the empty set $\emptyset$.
For $k=1$, we identify $\binom {\cS}1$ and $\cS$ in the obvious way.
We write $\ga\in\binom{\nn}{k}$ as $\set{\ga_1,\dots,\ga_k}$, where we
assume that $\ga_1<\dots<\ga_k$.
Then 
\begin{equation}\label{eq-u}
  \Z=\sum_{\ga\in\binom{\nn}m}I_\ga,
\end{equation}
where 
\begin{equation}
  I_\ga=\prodjm \indic{\xi_{\ga_j}=w_j}.
\end{equation}

\begin{remark}
  In the limit theorems, we are studying the asymptotic distribution of $\Z$.
We then assume that \ntoo{} and (usually) \mtoo; we thus implicitly consider
a sequence of words $w^{(n)}$ of lengths $m_n=|w^{(n)}|$. 
But for simplicity we do not show this in the notation. 
\end{remark}

We have $\E I_\ga=p_w$ for every $\ga$. Hence,
\begin{equation}\label{EZ}
\E \Z=\sum_{\ga\in\binom{\nn}m}\E I_\ga = \binom{n}{m} p_w.
\end{equation}

Further, let
\begin{equation}
  Y_\ga := p_w\qw I_\ga,
\end{equation}
so $\E Y_\ga=1$,
and
\begin{equation}\label{Zx1}
  \Zx := p_w\qw \Z = \sum_{\ga\in\binom{\nn}{m}}Y_\ga,
\end{equation}
so $\E\Zx=\binom{n}{m}$ and
\begin{equation}\label{bZx}
\Zx-\E \Zx = p_w\qw \Z-\binom{n}{m}
=\sum_{\ga\in\binom{\nn}m}\bigpar{Y_\ga-1}.
\end{equation}

We also write $\norm{Y}_p:=\bigpar{\E |Y|^p}^{1/p}$ for the $L^p$ norm of a
random variable $Y$,
while $\norme{\vx}$ is the usual Euclidean norm of
a vector $\vx$ in some $\bbR^m$.

$C$ denotes constants that may be different at different occurrences; they
may depend on the alphabet $\cA$ and $(p_a)_{a\in \cA}$,
but not on $n$, $m$ or $w$.

Finally, $\dto$ and $\pto$ mean convergence in distribution and probability,
respectively.

We are now ready to present our main results regarding the limiting distribution
of $Z$, the number of subsequence $w=a_1, \ldots a_m$ occurrences
when $m\to \infty$. We start with a simple example,
namely,  $w=a^m=a\dotsm a$ for some $a\in \cA$, and show that
depending on whether $m=o(\sqrt{n})$ or not the number of subsequences will
follow asymptotically either the normal distribution or the log-normal
distribution.

Before we present our results we consider 
asymptotically normal and log-normal distributions in general, 
and discuss their relation.

\subsection{Asymptotic normality and log-normality}

If $X_n$ is a sequence of random variables and $a_n$ and $b_n$ are sequences
of real numbers, with $b_n>0$, then
\begin{align}
  X_n\sim\ASN(a_n,b_n)
\end{align}
means that
\begin{align}
  \frac{X_n-a_n}{\sqrt{b_n}}\dto N(0,1).
\end{align}
We say that  $X_n$ is 
\emph{asymptotically normal} if $X_n\sim\ASN(a_n,b_n)$ for some $a_n$ and
$b_n$, and 
\emph{asymptotically log-normal} if $\ln X_n\sim\ASN(a_n,b_n)$ for some
$a_n$ and $b_n$ (this assumes $X_n\ge0$).
Note that these notions are equivalent when the asymptotic variance $b_n$ is
small, as  made precise by the following lemma.

\begin{lemma}\label{Lasn}
  If\/ $b_n\to0$, and $a_n$ are arbitrary, then
  \begin{align}
    \label{lasn}
 \ln X_n\sim \ASN(a_n,b_n)
\iff 
 X_n\sim \ASN(e^{a_n},b_ne^{2a_n}).
  \end{align}
\end{lemma}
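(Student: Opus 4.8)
The plan is to reduce both sides of \eqref{lasn} to statements about two explicitly related normalized variables, and then transfer convergence from one to the other by Slutsky's theorem. Writing $t_n:=\sqrt{b_n}\to0$, I would set
\[
  W_n:=\frac{\ln X_n-a_n}{t_n},\qquad
  V_n:=\frac{X_n-e^{a_n}}{e^{a_n}t_n},
\]
so that, since $\sqrt{b_n e^{2a_n}}=e^{a_n}t_n$, the left-hand statement of \eqref{lasn} is exactly $W_n\dto N(0,1)$ and the right-hand one is exactly $V_n\dto N(0,1)$. The payoff of this normalization is the exact termwise identity $1+t_nV_n=e^{t_nW_n}$, equivalently
\[
  V_n=\frac{e^{t_nW_n}-1}{t_n},\qquad
  W_n=\frac{\ln(1+t_nV_n)}{t_n},
\]
where the second form makes sense because $X_n>0$ forces $1+t_nV_n=X_ne^{-a_n}>0$.

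Both implications then follow from the single observation that $V_n-W_n\pto0$ whenever the relevant variable is tight. For the forward direction, assume $W_n\dto N(0,1)$, so in particular $W_n=\Op(1)$. Using the elementary bound $|e^x-1-x|\le\tfrac12 x^2e^{|x|}$ with $x=t_nW_n$ gives
\[
  |V_n-W_n|=\frac{|e^{t_nW_n}-1-t_nW_n|}{t_n}\le\tfrac12\,t_nW_n^2\,e^{t_n|W_n|}.
\]
Since $t_n\to0$ and $W_n=\Op(1)$, we have $t_n|W_n|\pto0$, hence $e^{t_n|W_n|}=\Op(1)$ and $t_nW_n^2\pto0$; therefore $V_n-W_n\pto0$, and Slutsky's theorem yields $V_n\dto N(0,1)$. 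The reverse direction is entirely symmetric: assuming $V_n\dto N(0,1)$, so $V_n=\Op(1)$ and $t_nV_n\pto0$, I would apply $|\ln(1+x)-x|\le x^2$ (valid for $|x|\le\tfrac12$) with $x=t_nV_n$ to obtain $|W_n-V_n|\le t_nV_n^2$ on the event $\{|t_nV_n|\le\tfrac12\}$, whose probability tends to $1$; thus $W_n-V_n\pto0$ and $W_n\dto N(0,1)$.

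There is no serious obstacle here, and the two halves of the argument mirror one another; the only points needing care are bookkeeping ones. First, one must verify that the normalizing constant $\sqrt{b_ne^{2a_n}}$ equals $e^{a_n}t_n$, so that $V_n$ and $W_n$ are linked by a map free of $a_n$. Second, one must confirm that the converging sequence is $\Op(1)$ before invoking the Taylor bound, since it is this tightness together with $t_n\to0$ that kills the quadratic error term. The hypothesis $b_n\to0$ enters precisely through $t_n\to0$, which is exactly what forces the nonlinear change of variables between $V_n$ and $W_n$ to behave like the identity in the limit.
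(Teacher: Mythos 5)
Your proof is correct and takes essentially the same route as the paper's: both exploit that $b_n\to0$ forces the rescaled variable to concentrate (so $\ln$ and $\exp$ act as the identity to first order) and both conclude via Slutsky's theorem. The only difference is bookkeeping — the paper encodes the linearization error as a ratio $\ln X_n/(X_n-1)\pto 1$ multiplying the normalized variable, whereas you encode it as an additive difference $V_n-W_n\pto 0$ controlled by explicit Taylor bounds.
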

\begin{proof}
  By replacing $X_n$ by $X_n/e^{a_n}$, we may assume that $a_n=0$.
If $\ln X_n\sim\ASN(0,b_n)$ with $b_n\to0$, then $\ln X_n\pto0$, and thus 
$X_n\pto 1$. It follows that $\ln X_n/(X_n-1)\pto1$ (with $0/0:=1$), and
thus 
\begin{align}
  \frac{X_n-1}{b_n\qq}
=
  \frac{X_n-1}{\ln X_n}
  \frac{\ln X_n}{b_n\qq}
\dto N(0,1),
\end{align}
and thus $X_n\sim\ASN(1,b_n)$.

The converse is proved by the same argument.
\end{proof}

\begin{remark}\label{RASN}
  \refL{Lasn} is best possible.
Suppose that $\ln X_n\sim\ASN(a_n,b_n)$. 
If $b_n\to b>0$, 
then $\ln\bigpar{X_n/e^{a_n}}=\ln X_n-a_n\dto N(0,b)$, and thus
\begin{align}
  X_n/e^{a_n} \dto e^{\zeta_b}, \qquad \zeta_b\sim N(0,b).
\end{align}
In this case (and only in this case), 
$X_n$ thus converges in distribution, after scaling, 
to a log-normal distribution.
If $b_n\to\infty$, then no linear scaling of $X_n$ can
converge in distribution to a non-degenerate limit, as is easily seen.
\end{remark}

\subsection{A simple example}\label{Saaa}

We consider first a simple example where the asymptotic distribution can be
found easily by explicit calculations.
Fix $a\in\cA$ and let $w=a^m=a\dotsm a$, a string with $m$ identical
letters.
Then, if $N=N_a$ is the number of occurrences of $a$ in $\xin$, then
\begin{align}\label{emma}
  \Z=\binom{\Na}{m}.
\end{align}
We will show that $\Z$ is asymptotically normal if $m$ is small, and
log-normal for larger $m$.

\begin{theorem}\label{TLN}
Let $w=a^m$.
Suppose that $m<np_a$, with $np_a-m\gg n\qq$.
  \begin{romenumerate}
  \item \label{TLN1}
Then
\begin{align}\label{tln1}
  \ln\Z\sim
\ASN
\Bigpar{\ln\binom{np_a}{m},\,n\Bigabs{\ln\Bigpar{1-\frac{m}{np_a}}}^2p_a(1-p_a)}.
\end{align}
\item \label{TLN2}
In particular, if $m=o(n)$, then 
\begin{align}\label{tln2}
  \ln\Z\sim
\ASN\Bigpar{\ln\binom{np_a}{m},\bigpar{p_a\qw-1}\frac{m^2}{n}}.
\end{align}
\item \label{TLN3}
If $m=o\bigpar{n\qq}$, then this implies
\begin{align}\label{tln3}
  \Z/\E\Z\sim
\ASN\Bigpar{1,\bigpar{p_a\qw-1}\frac{m^2}{n}},
\end{align}
and thus
\begin{align}\label{tln4}
  \Z\sim
\ASN\Bigpar{\E\Z,\bigpar{p_a\qw-1}\frac{m^2}{n}(\E\Z)^2}
.\end{align}
  \end{romenumerate}
\end{theorem}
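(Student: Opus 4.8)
The plan is to build everything on the identity $\Z=\binom{\Na}{m}$ recorded in \eqref{emma}, where $\Na\sim\Bin(n,p_a)$ is a sum of \iid{} indicators and hence obeys the central limit theorem $(\Na-np_a)/\sqrt{np_a(1-p_a)}\dto N(0,1)$. Writing $\ln\Z=\ln\binom{\Na}{m}$ as a smooth function of $\Na$ evaluated near its mean $np_a$, all three parts will follow from a quantitative delta method. First note that the hypothesis $np_a-m\gg n\qq$ guarantees $\P(\Na<m)\to0$, so that $\Z\ge1$ and $\ln\Z$ is well defined \whp{}; this is what lets us ignore the degenerate boundary of the binomial coefficient.

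For part \ref{TLN1} I would set $D:=\Na-np_a$ and use $\binom{x}{m}=\tfrac1{m!}\prod_{j=0}^{m-1}(x-j)$ to get the exact identity
\begin{equation*}
\ln\Z-\ln\binom{np_a}{m}=\sum_{j=0}^{m-1}\ln\Bigpar{1+\frac{D}{np_a-j}}.
\end{equation*}
The first-order term is $D\,S_1$ with $S_1:=\sum_{j=0}^{m-1}(np_a-j)\qw$; comparing this sum with $\int_0^m(np_a-x)\qw\dd x=\bigabs{\ln(1-m/(np_a))}$ shows $S_1=\bigabs{\ln(1-m/(np_a))}\bigpar{1+o(1)}$, the error being at most $(np_a-m)\qw$. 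Denoting by $V_n$ the variance claimed in \eqref{tln1}, so that $\sqrt{V_n}=\bigabs{\ln(1-m/(np_a))}\sqrt{np_a(1-p_a)}$, this gives
\begin{equation*}
\frac{\ln\Z-\ln\binom{np_a}{m}}{\sqrt{V_n}}=\frac{S_1}{\bigabs{\ln(1-m/(np_a))}}\cdot\frac{D}{\sqrt{np_a(1-p_a)}}+\frac{R}{\sqrt{V_n}},
\end{equation*}
where $R$ collects the higher-order terms. Since the first factor tends to $1$ and $D/\sqrt{np_a(1-p_a)}\dto N(0,1)$, Slutsky's theorem yields \eqref{tln1} as soon as $R/\sqrt{V_n}\pto0$.

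The main obstacle is controlling this remainder $R=\sum_{j}\bigsqpar{\ln\bigpar{1+D/(np_a-j)}-D/(np_a-j)}$, whose modulus is of order $D^2S_2$ with $S_2:=\sum_{j=0}^{m-1}(np_a-j)\qww$, a sum that is $O\bigpar{m/(np_a(np_a-m))}$ by comparison with an integral. Because $\binom{x}{m}$ degenerates as $x\downarrow m$, I would first truncate to the event $A_n:=\bigcpar{|D|\le\go_n\sqrt n}$, choosing $\go_n\to\infty$ slowly enough that $\go_n\sqrt n=o(np_a-m)$; the hypothesis $np_a-m\gg n\qq$ makes this possible, and Chebyshev's inequality gives $\P(A_n)\to1$. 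On $A_n$ one has $|R|\le C\go_n^2 n\cdot m/(np_a(np_a-m))$, and writing $\rho:=m/(np_a)$ a short case analysis — $\rho\to0$; $\rho$ bounded away from $0$ and $1$; and $\rho\to1$ with $1-\rho\gg n\qqw$ — shows that, for a suitably slow $\go_n$, this is $o(\sqrt{V_n})$ in every regime; the same truncation also absorbs the $S_1$-approximation error. Verifying all three regimes at once is the one genuinely delicate point.

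Parts \ref{TLN2} and \ref{TLN3} are then corollaries. When $m=o(n)$ we have $\rho\to0$, so $np_a-m\sim np_a\gg n\qq$ (hence \ref{TLN1} applies) and $\bigabs{\ln(1-\rho)}^2\sim\rho^2=m^2/(np_a)^2$, which turns $V_n$ into $(p_a\qw-1)m^2/n$; this is \eqref{tln2}. When moreover $m=o(n\qq)$ the variance $b_n:=(p_a\qw-1)m^2/n\to0$, so \refL{Lasn} converts \eqref{tln2} into $\Z\sim\ASN\bigpar{\binom{np_a}{m},\,b_n\binom{np_a}{m}^2}$. It remains to replace the centring $\binom{np_a}{m}$ by $\E\Z=\binom nm p_w=\binom nm p_a^m$; a direct expansion gives $\ln\binom{np_a}{m}-\ln\E\Z=\sum_{j=0}^{m-1}\ln\frac{1-j/(np_a)}{1-j/n}=O(m^2/n)=O(b_n)=o(\sqrt{b_n})$, whence $\binom{np_a}{m}/\E\Z\to1$ and a final application of Slutsky's theorem gives \eqref{tln3}. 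Rescaling by $\E\Z$ then yields \eqref{tln4}.
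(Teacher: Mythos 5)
Your proposal is correct and takes essentially the same route as the paper's proof: the same reduction $\Z=\binom{\Na}{m}$, the same first-order (delta-method) expansion of $\ln\binom{np_a+D}{m}-\ln\binom{np_a}{m}$ in $D=\Na-np_a$ under a truncation $|D|\le\go_n\sqrt n$ with $\sqrt n\ll\go_n\sqrt n\ll np_a-m$, Slutsky's theorem for part \ref{TLN1}, then $|\ln(1-m/(np_a))|\sim m/(np_a)$ for part \ref{TLN2}, and \refL{Lasn} together with $\binom{np_a}{m}\sim\E\Z$ for part \ref{TLN3}. The only difference is cosmetic: the paper writes the increment as a double integral of $(\ln\gG)''$ and invokes Stirling's formula to get a single multiplicative $(1+o(1))$ error, whereas you expand the product $\prod_{j=0}^{m-1}(1+D/(np_a-j))$ directly and control the additive remainder by a case analysis on $m/n$ — the same computation in more elementary dress.
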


\begin{proof}
\pfitemref{TLN1}
We have $\Na\sim\Bin(n,\pa)$. Define
$
  Y:=\Na-n\pa
$. 
Then, by the Central Limit Theorem, 
\begin{align}\label{hw}
  Y\sim\ASN\bigpar{0,n\pa(1-\pa)}.
\end{align}
By \eqref{emma}, we have
\begin{align}\label{jesper}
  \ln\Z-\ln\binom{n\pa}{m}
&= \ln\binom{n\pa+Y}{m}-\ln\binom{n\pa}{m}
\notag\\&
=\ln\gG(n\pa+Y+1)-\ln\gG(n\pa+Y-m+1)-\ln m!
\notag\\&\qquad
-\bigpar{\ln\gG(n\pa+1)-\ln\gG(n\pa-m+1)-\ln m!}
\notag\\&
=\int_{y=0}^Y\int_{x=-m}^0(\ln\gG)''(n\pa+x+y+1)\dd x\dd y 
\end{align}
where $\Gamma(x)$ is the Euler gamma function.
We fix
a sequence $\go_n\to\infty$ such
that
$n\pa-m\gg\go_n\gg n\qq$; this is possible by the assumption. 
Note that \eqref{hw} implies that $Y/\go_n\pto0$, and thus
$\P(|Y|\le\go_n)\to1$. We may thus in the sequel assume $|Y|\le\go_n$.
We assume also that $n$ is so large that $n\pa-m\ge 2\go_n>0$.

Stirling's formula implies, by taking the logarithm and differentiating
twice (in the complex half-plane $\Re z>\frac12$, say)
\begin{align}\label{sti''}
  (\ln\gG)''(x) = \frac{1}{x}+O\Bigparfrac{1}{x^2}
=\frac{1}{x}\Bigpar{1+O\Bigparfrac{1}{x}}
,\qquad x\ge1.
\end{align}
Consequently, \eqref{jesper} yields, noting the assumptions just made imply
$|Y|\le\go_n\le \frac12(n\pa-m)$,
\begin{align}\label{jeppe}
  \ln\Z-\ln\binom{n\pa}{m}
&
=\int_{y=0}^Y\int_{x=-m}^0
\frac{1}{n\pa+x+y+1} \Bigpar{1+O\Bigparfrac{1}{n\pa-m}}\dd x\dd y
\notag\\&
=\int_{y=0}^Y\int_{x=-m}^0
 \frac{1}{n\pa+x}\Bigpar{1+O\Bigparfrac{\go_n}{n\pa-m}}\dd x\dd y
\notag\\&
=\Bigpar{1+O\Bigparfrac{\go_n}{n\pa-m}} 
Y\int_{x=-m}^0 \frac{1}{n\pa+x}\dd x
\notag\\&
=\bigpar{1+o(1)}
Y {\ln \frac{n\pa}{n\pa-m}}
.\end{align}
Consequently, using also \eqref{hw}, we obtain
\begin{align}
\frac{ \ln\Z-\ln\binom{n\pa}{m}}{n\qq\bigabs{\ln\bigpar{1-\frac{m}{n\pa}}}}
=\bigpar{1+\op(1)} \frac{Y}{n\qq} \dto N\bigpar{0,\pa(1-\pa)},
\end{align}
which is equivalent to \eqref{tln1}.

\pfitemref{TLN2}
If $m=o(n)$, then $\bigabs{\ln\bigpar{1-\frac{m}{n\pa}}}\sim
\frac{m}{n\pa}$, and \eqref{tln2} follows.

\pfitemref{TLN3}
If $m=o(n\qq)$, then 
\ref{TLN2} applies, so \eqref{tln2} holds; hence \refL{Lasn} implies
\begin{align}\label{t4}
  Z\bigm/\binom{n\pa}{m}\sim
\ASN \Bigpar{1,\bigpar{p_a\qw-1}\frac{m^2}{n}}.
\end{align}
Furthermore, 
\begin{align}
  \E Z = \binom{n}{m}\pa^m
=\frac{n^m e^{O(m^2/n)}}{m!}\pa^m
\sim \frac{n^m}{m!}\pa^m
\end{align}
and, similarly, $\binom{n\pa}{m}\sim\frac{n^m\pa^m}{m!}$.
Hence, $\E Z \sim \binom{n\pa}{m}$ and \eqref{tln3} follows from \eqref{t4};
\eqref{tln4} is an immediate consequence.
\end{proof}

\begin{example}\label{Eaaa}
  Let $w=a^m$ as in \refT{TLN}, and let 
$m\sim c\sqrt n$ for some $c>0$.
Then, as \ntoo,
by \refT{TLN}\ref{TLN2}, with $Z=Z_n$,  $z_n:=\binom{np_a}{m}$ and
$\gss:=c^2(p_a-1)$, 
\begin{align}
  \ln Z_n \sim \ASN\bigpar{\ln z_n,\gss}
\end{align}
and thus
\begin{align}
  \ln\frac{Z_n}{z_n} \dto N\bigpar{0,\gss}.
\end{align}
Hence, $Z_n/z_n$ converges in distribution to a log-normal distribution,
so $Z_n$ is
asymptotically log-normal but not
asymptotically normal.
See also \refR{RASN}.
\end{example}

 \subsection{General results}

We now present our main results.
However, first we discuss the road map of our approach.
First, we observe that the representation  \eqref{eq-u} shows that
$Z$ can be viewed as a $U$-statistic.
For convenience, we consider $\Zx$ in \eqref{Zx1},
which differs from $\Z$ by a constant factor only,
and  show in \eqref{Nb1} that $\Zx - \E\Zx$
can be
decomposed into 
a sum $\sum_{\ell=1}^m V_\ell$ of orthogonal random variables
$V_\ell$ such that, when $m$ is not too large, 
$\Var \bigpar{\sum_{\ell=2}^m V_{\ell}} =o(\Var V_1)$.
Next, in 
Lemma~\ref{LV} we prove that $V_1$ appropriately normalized converges
to the standard normal distribution. This will allow us to conclude the
asymptotic normality of $Z$.

In this paper, we only consider the region $m=o\bigpar{n\qq}$.
First, for $m=o\bigpar{n\qqq}$ we claim that the number of subsequence
occurrences always
is asymptotically normal. 

\begin{theorem}\label{T2a}
  If\/ $m=o\bigpar{n\qqq}$, then
\begin{align}\label{t1Na}
Z \sim \ASN\Bigpar{\binom{n}{m} p_w,\gss_1p_w^2},
\end{align}
where 
\begin{align}
\sigma_1^2 &= \sum_{i=1}^n \sum_{a\in \cA} p_a^{-1} \left(\sum_{j:~w_j=a} 
\binom{i-1}{j-1}\binom{n-i}{m-j}\right)^2 -n \binom{n-1}{m-1}^2.  
\label{ws3}
\end{align}
Furthermore,
$\E Z = \binom{n}{m} p_w$ and\/
$\Var Z \sim p_w^2\sigma_1^2$. 
\end{theorem}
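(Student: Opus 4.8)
The plan is to exploit the $U$-statistic structure \eqref{eq-u} through a Hoeffding (ANOVA) decomposition and to isolate a dominant first-order term. Write $g_j(\xi):=p_{w_j}\qw\indic{\xi=w_j}$, so $\E g_j(\xi)=1$, and set $h_j:=g_j-1$, a mean-zero function of a single letter. Since $Y_\ga=\prodjm g_j(\xi_{\ga_j})$, expanding the product gives $Y_\ga-1=\sum_{\emptyset\ne S\subseteq\mm}\prod_{j\in S}h_j(\xi_{\ga_j})$, so by \eqref{Zx1}
\[
 \Zx-\E\Zx=\sum_{\ell=1}^m V_\ell, \qquad
 V_\ell:=\sum_{\ga\in\binom{\nn}m}\ \sum_{S\in\binom{\mm}\ell}\ \prod_{j\in S}h_j(\xi_{\ga_j}).
\]
These terms are orthogonal: a product $\prod_{j\in S}h_j(\xi_{\ga_j})$ depends only on the letters at the positions $\set{\ga_j:j\in S}$, and two such products with different position-sets always leave an isolated mean-zero factor, so their covariance vanishes. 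Hence $\Var\Zx=\sum_{\ell=1}^m\Var V_\ell$, and it suffices to show that $V_1$ is asymptotically normal and that $\sum_{\ell\ge2}\Var V_\ell=o(\Var V_1)$.

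First I would treat $V_1$. Re-indexing by the text position $i$ that carries the single mean-zero factor, and writing $c_{i,j}:=\binom{i-1}{j-1}\binom{n-i}{m-j}$ for the number of $\ga$ with $\ga_j=i$, one obtains $V_1=\sum_{i=1}^n X_i$ with $X_i:=\sum_{a\in\cA} d_{i,a}\bigpar{p_a\qw\indic{\xi_i=a}-1}$ and $d_{i,a}:=\sum_{j:\,w_j=a}c_{i,j}$. The $X_i$ are independent and mean-zero, and since $\sum_a d_{i,a}=\sum_j c_{i,j}=\binom{n-1}{m-1}$ by Vandermonde, a direct computation gives $\Var X_i=\sum_a p_a\qw d_{i,a}^2-\binom{n-1}{m-1}^2$. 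Summing over $i$ shows $\Var V_1=\gss_1$ is exactly \eqref{ws3}. As each $|X_i|\le C\binom{n-1}{m-1}$, asymptotic normality of $V_1/\gs_1$ follows from the Lindeberg CLT once we know $\gs_1\gg\binom{n-1}{m-1}$; this is the content of \refL{LV}.

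The crux — and the step that fixes the exponent $\tfrac13$ — is two-sided control of the variances. For the lower bound I would prove the universal estimate $\gss_1\ge c\,\tfrac nm\binom{n-1}{m-1}^2$. Writing $Q_{i,a}:=d_{i,a}/\binom{n-1}{m-1}$ (a probability vector in $a$), we have $\Var X_i=\binom{n-1}{m-1}^2\bigpar{\sum_a p_a\qw Q_{i,a}^2-1}\ge0$ by the \CSineq, with the bracket vanishing only when $Q_{i,\cdot}=\vp$. For boundary positions, say $i\le\eps n/m$, the weights $c_{i,j}$ concentrate on ranks $j=O(1)$, so $Q_{i,\cdot}$ is close to a unit vector (essentially $\indic{w_1=a}$) and the bracket is bounded below by a positive constant depending only on $\px$; summing over the $\asymp n/m$ such $i$ gives the bound (and in particular $\gs_1\gg\binom{n-1}{m-1}$, as needed above). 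For the upper bound I would use the covariance expansion $\E[Y_\ga Y_\gb]-1=\sum_{\emptyset\ne U\subseteq\ga\cap\gb}\prod_{t\in U}M_{j(t),k(t)}$, where $M_{j,k}:=\E[h_j(\xi)h_k(\xi)]$ satisfies $|M_{j,k}|\le C$; grouping the $\ga,\gb$ contributions by $|U|=\ell$ and using that, for each $\ell$-set $T$ of text positions, the total multiplicity of $\ga\supseteq T$ equals $\binom{n-\ell}{m-\ell}$, yields $\Var V_\ell\le C^\ell\binom n\ell\binom{n-\ell}{m-\ell}^2$. Since $\binom{n-\ell}{m-\ell}\asymp (m/n)^{\ell-1}\binom{n-1}{m-1}$, the dominant higher term is $\Var V_2\le C\,m^2\binom{n-1}{m-1}^2$, and dividing by the lower bound on $\gss_1$ gives $\sum_{\ell\ge2}\Var V_\ell=O(m^3/n)\,\gss_1$.

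Finally I would conclude by Slutsky's theorem: when $m=o\bigpar{n\qqq}$ the remainder $R:=\sum_{\ell\ge2}V_\ell$ has $\Var R=o(\gss_1)$, so $R/\gs_1\pto0$, and combined with $V_1/\gs_1\dto N(0,1)$ this gives $(\Zx-\E\Zx)/\gs_1\dto N(0,1)$. Multiplying by $p_w$ and recalling $\Z=p_w\Zx$, $\E\Z=\binom nm p_w$, yields \eqref{t1Na}, while $\Var\Z=p_w^2\Var\Zx=p_w^2\bigpar{\gss_1+o(\gss_1)}\sim p_w^2\gss_1$. The main obstacle is the universal lower bound on $\gss_1$: it is what pins the threshold at $n\qqq$ through the ratio $m^3/n$, and strengthening it under structural hypotheses on $w$ is precisely what lets \refT{Tka} reach $o(\sqrt n)$.
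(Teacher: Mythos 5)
Your proposal is correct and follows essentially the same route as the paper: the Hoeffding orthogonal decomposition $\Zx-\E\Zx=\sum_{\ell=1}^m V_\ell$, the upper bound $\Var V_\ell\le C^\ell\binom{n}{\ell}\binom{n-\ell}{m-\ell}^2$, the universal lower bound $\gss_1\ge c\,\frac{n}{m}\binom{n-1}{m-1}^2$ obtained from the $\asymp n/m$ boundary positions where the weights $c(i,j)$ concentrate on $j=1$, a CLT for $V_1$, and Slutsky to transfer normality to $\Zx$. The only deviations are cosmetic: you bound $\Var V_\ell$ by summing covariances with $|M_{j,k}|\le C$ where the paper applies Minkowski's inequality to $\norm{V_{\ell,\gb}}_2$, and you verify Lindeberg's condition via the uniform bound $|X_i|\le C\binom{n-1}{m-1}$ where the paper checks Lyapunov's condition; both yield the same estimates and the same threshold $m=o\bigpar{n\qqq}$.
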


In the second main result, we restrict the patterns $w$ to such that are not
typical for the random text; however, we will allow
$m=o\bigpar{n\qq}$.  

\begin{theorem}\label{Tka}
Let\/ $\vq=(q_a)_{a\in\cA}$ be the proportions of the letters in $w$, \ie,
$q_a:=\frac{1}{m}\sumjm\indic{w_j=a}$.
Suppose that  $\liminf_\ntoo\norme{\vq-\vp}>0$.
If further\/ $m=o\bigpar{n\qq}$, then we have the asymptotic normality
\begin{align}\label{t1Na-2}
Z \sim \ASN\Bigpar{\binom{n}{m} p_w,\gss_1p_w^2},
\end{align}
where $\gss_1$ is given by \eqref{ws3}.
Furthermore,
$\E Z = \binom{n}{m} p_w$ and\/
$\Var Z \sim p_w^2\sigma_1^2$. 
\end{theorem}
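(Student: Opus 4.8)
The plan is to run the same projection-method argument as in \refT{T2a}. Starting from the orthogonal decomposition $\Zx-\E\Zx=\sum_{\ell=1}^m V_\ell$, where $V_1$ is the linear (first Hoeffding) projection, I would show that $V_1$ dominates in $L^2$ and then combine the central limit theorem for $V_1/\sqrt{\sigma_1^2}$ (\refL{LV}) with Slutsky's theorem to get $(\Zx-\E\Zx)/\sqrt{\sigma_1^2}\dto N(0,1)$; multiplying by $p_w$ and recalling $\E\Z=\binom nm p_w$ then yields \eqref{t1Na-2} and $\Var\Z\sim p_w^2\sigma_1^2$. Here $V_1$ is the independent-across-$i$ sum $V_1=\sumin\sumaa c_{i,a}\bigpar{p_a\qw\indic{\xi_i=a}-1}$ with $c_{i,a}:=\sum_{j:\,w_j=a}\binom{i-1}{j-1}\binom{n-i}{m-j}$, and a direct computation recovers $\Var V_1=\sigma_1^2$ as in \eqref{ws3}. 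The argument thus reduces to two variance estimates: a lower bound $\sigma_1^2\ge c\,n\binom{n-1}{m-1}^2$, and the negligibility $\Var\bigpar{\sum_{\ell\ge2}V_\ell}=o(\sigma_1^2)$. The first is the genuinely new ingredient and is where the hypothesis $\liminf_\ntoo\norme{\vq-\vp}>0$ enters; the orthogonal decomposition and the CLT for $V_1$ (\refL{LV}) are shared with \refT{T2a}.

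For the key lower bound I would recast \eqref{ws3} as a $\chi^2$-divergence. The Vandermonde identity gives $\sumaa c_{i,a}=\binom{n-1}{m-1}$, so $r_{i,a}:=c_{i,a}/\binom{n-1}{m-1}$ defines, for each $i$, a probability vector on $\cA$, and \eqref{ws3} becomes
\begin{equation*}
\sigma_1^2=\binom{n-1}{m-1}^2\sumin\chi^2\bigpar{r_{i,\cdot}\,\|\,\vp},
\qquad
\chi^2(r\|\vp):=\sumaa\frac{(r_a-p_a)^2}{p_a}.
\end{equation*}
A second Vandermonde identity, $\sumin\binom{i-1}{j-1}\binom{n-i}{m-j}=\binom nm$, shows that $\sumin r_{i,a}=n q_a$, \ie\ the average over $i$ of the vectors $r_{i,\cdot}$ is exactly $\vq$. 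Since $r\mapsto\chi^2(r\|\vp)$ is convex, Jensen's inequality gives $\tfrac1n\sumin\chi^2(r_{i,\cdot}\|\vp)\ge\chi^2(\vq\|\vp)\ge\norme{\vq-\vp}^2$, the last step because $p_a\le1$. Hence, under the hypothesis, there is $c>0$ with $\sigma_1^2\ge c\,n\binom{n-1}{m-1}^2$ for all large $n$. As a byproduct, no single position contributes more than an $O(1/n)$ fraction of $\sigma_1^2$ (each contributes at most $C\binom{n-1}{m-1}^2$), which is precisely the non-dominance that \refL{LV} requires.

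It remains to feed in the upper bound on the higher projections. As in the proof of \refT{T2a} I would establish an estimate of the form $\Var\bigpar{\sum_{\ell\ge2}V_\ell}\le C\,\tfrac{m^2}{n}\,n\binom{n-1}{m-1}^2$, whose dominant contribution is the two-overlap term $V_2$ and whose tail is summable because consecutive orders shrink by a factor $O(m^2/n)$. Dividing by the lower bound of the previous paragraph gives
\begin{equation*}
\frac{\Var\bigpar{\sum_{\ell\ge2}V_\ell}}{\sigma_1^2}
\le\frac{C}{c}\,\frac{m^2}{n}=o(1)
\qquad\text{precisely when } m=o\bigpar{n\qq},
\end{equation*}
so $\Var(\Zx-\E\Zx-V_1)=o(\sigma_1^2)$ and the reduction of the first paragraph goes through. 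I expect this last upper bound to be the main obstacle: it requires controlling \emph{all} the projection variances $\Var V_\ell$ simultaneously and uniformly in $w$ — not merely $V_2$ — and verifying that the resulting series is genuinely of order $m^2/n$. This is also exactly where the two theorems part company: without a structural hypothesis the Jensen bound degenerates as $\vq\to\vp$, so $\sigma_1^2$ may be of strictly smaller order than $n\binom{n-1}{m-1}^2$ and one is pushed back to $m=o\bigpar{n\qqq}$, whereas the single scalar inequality $\liminf\norme{\vq-\vp}>0$ restores the full order of $\sigma_1^2$ and hence the optimal range. That the range cannot be extended past $\sqrt n$ even under the hypothesis is shown by $w=a^m$ with $m\asymp\sqrt n$, which is asymptotically log-normal rather than normal (\refE{Eaaa}).
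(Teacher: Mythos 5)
Your proposal is correct and is essentially the paper's own proof: the paper deduces Theorem~\ref{Tka} from the general Theorem~\ref{T1} (projection method, Lemma~\ref{L1} for the negligibility of the higher projections $V_\ell$, $\ell\ge2$, Lemma~\ref{LV} for the CLT of $V_1$), with hypothesis \eqref{t1b} verified by Lemma~\ref{Lk}, which is exactly your lower bound $\gss_1\ge n\binom{n-1}{m-1}^2\norme{\vq-\vp}^2$. Your $\chi^2$-divergence/Jensen derivation of that bound is a rephrasing of the paper's \CSineq{} argument (Jensen for the quadratic form $r\mapsto\chi^2(r\,\|\,\vp)$ applied to the probability vectors $r_{i,\cdot}$, whose average over $i$ is $\vq$, is the same inequality as the paper's bound $n\gss_1\ge\E\bigpar{\sumin\psi_i(\xi)}^2$ combined with Lemma~\ref{Lgf}\ref{Lgf3}), so the two proofs coincide step for step.
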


\section{Analysis and Proofs}

In this section we will prove our main results. We start with some preliminaries.

\subsection{Preliminaries and more notation}

Let, for $a\in\cA$,
\begin{equation}\label{gfa}
  \gf_a(x):=p_a\qw\indic{x=a}-1.
\end{equation}
Thus, letting $\xi$ be any random variable with the distribution of $\xi_i$,
\begin{align}
  \label{gf0}
\E\gf_a(\xi)=0,
\qquad a\in\cA.
\end{align}
Let $\px:=\min_a p_a$ and 
\begin{equation}\label{B}
B:=\px\qw-1.  
\end{equation}

\begin{lemma}\label{Lgf}
  Let $\gf_a$ and $B$ be as above. 
\begin{romenumerate}
\item\label{Lgf1} 
For every $a\in\cA$,
  \begin{align}\label{lgf1}
    \E \bigsqpar{\gf_a(\xi)^2} = p_a\qw-1 \le B.
  \end{align}
\item\label{Lgf2}
 For some $c_1>0$ and every $a\in\cA$,
  \begin{align}
    \norm{\gf_a(\xi)}_2 = \bigpar{p_a\qw-1}\qq \ge c_1.\label{lgf2}
  \end{align}
\item\label{Lgf3} 
For any vector $\vr=(r_a)_{a\in\cA}$ with $\sum_ar_a=1$,
  \begin{align}\label{lgf3}
    \Bignormm{\sum_{a\in\cA}r_a\gf_a(\xi)}
\ge \norme{\vr-\vp}
:=\Bigpar{\sum_{a\in\cA}|r_a-p_a|^2}\qq.
  \end{align}
\end{romenumerate}
\end{lemma}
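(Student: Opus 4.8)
The plan is to handle all three parts by direct computation, exploiting that $\gf_a(\xi)=p_a\qw\indic{\xi=a}-1$ is an explicit affine function of the single Bernoulli variable $\indic{\xi=a}$. For part \ref{Lgf1} I would use that $\E\gf_a(\xi)=0$ by \eqref{gf0}, so that $\E\bigsqpar{\gf_a(\xi)^2}=\Var\gf_a(\xi)=p_a\qww\Var\indic{\xi=a}=p_a\qww p_a(1-p_a)=p_a\qw-1$; equivalently one just expands the square and sums against the distribution of $\xi$. The bound $p_a\qw-1\le B$ is then immediate from $p_a\ge\px$ together with the definition $B=\px\qw-1$ in \eqref{B}. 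Part \ref{Lgf2} is almost a restatement: the displayed identity $\norm{\gf_a(\xi)}_2=\bigpar{p_a\qw-1}\qq$ is just the square root of \eqref{lgf1}, and for the uniform lower bound I would use that $\cA$ is finite and (excluding the trivial one-letter alphabet) each $p_a<1$, so every $p_a\qw-1$ is strictly positive; hence $c_1:=\min_{a\in\cA}\bigpar{p_a\qw-1}\qq$ is a positive constant that works for all $a$.

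Part \ref{Lgf3} is the only one requiring a short manipulation. Since $\sum_a r_a=1$, the linear combination simplifies to $\sum_{a\in\cA}r_a\gf_a(\xi)=\sum_{a\in\cA}\tfrac{r_a}{p_a}\indic{\xi=a}-1$, which has mean $0$. Writing $r_a=p_a+d_a$ with $\sum_a d_a=0$ (both $\vr$ and $\vp$ sum to $1$), the second moment of $\sum_a\tfrac{r_a}{p_a}\indic{\xi=a}$ is $\sum_{a\in\cA}p_a\bigpar{r_a/p_a}^2=\sum_{a\in\cA}r_a^2/p_a=1+\sum_{a\in\cA}d_a^2/p_a$, so after subtracting the squared mean one gets $\Bignormm{\sum_{a\in\cA}r_a\gf_a(\xi)}^2=\sum_{a\in\cA}d_a^2/p_a$. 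Because every $p_a\le1$, this is at least $\sum_{a\in\cA}d_a^2=\norme{\vr-\vp}^2$, which is precisely \eqref{lgf3}.

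I do not expect a genuine obstacle here: all three parts reduce to finite computations with the explicit Bernoulli variable $\indic{\xi=a}$. The only mildly clever step is the substitution $r_a=p_a+d_a$ in part \ref{Lgf3}, which makes the cross term $2\sum_a d_a$ vanish and collapses the claim to the trivial inequality $p_a\qw\ge1$.
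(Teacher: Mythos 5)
Your proposal is correct and follows essentially the same route as the paper: parts \ref{Lgf1} and \ref{Lgf2} by the identical variance computation $p_a^{-2}\Var[\indic{\xi=a}]=p_a\qw-1$, and part \ref{Lgf3} by evaluating the second moment of $\sum_a r_a\gf_a(\xi)$ to get $\sum_a(r_a-p_a)^2/p_a$ and invoking $p_a\le1$ (your substitution $r_a=p_a+d_a$ is just an algebraic rearrangement of the paper's direct evaluation $\sum_a p_a(r_a/p_a-1)^2$). Your explicit choice $c_1=\min_a(p_a\qw-1)\qq$ with the caveat about the one-letter alphabet is, if anything, slightly more careful than the paper, which leaves this implicit.
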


\begin{proof}
The definition \eqref{gfa} yields
\begin{equation}\label{lgf1p}
  \E \bigsqpar{\gf_a(\xi)^2}=
p_a^{-2}\Var\bigsqpar{\indic{\xi=a}}=p_a^{-2}p_a(1-p_a)=p_a\qw-1.
\end{equation}
Hence, \eqref{lgf1} and \eqref{lgf2} follow, with $B$ given by \eqref{B}.

Finally, for every $x\in\cA$, by \eqref{gfa} again,
\begin{align}
\sum_{a\in\cA}r_a\gf_a(x)
=r_xp_x\qw-\sum_{a\in\cA}r_a
=r_x/p_x-1
\end{align}
and thus
\begin{align}
  \E\Bigpar{\sum_{a\in\cA}r_a\gf_a(\xi)}^2
=\sum_{a\in\cA} p_a \bigpar{r_a/p_a-1}^2
=\sum_{a\in\cA} p_a\qw \bigpar{r_a-p_a}^2
\end{align}
and \eqref{lgf3} follows.
\end{proof}

\subsection{A decomposition}\label{Sdec}

The representation  \eqref{eq-u} shows that
$Z$ is a special case of a $U$-statistic. 
(Recall that, in general, a $U$-statistic is a sum over subsets $\ga$ as in
\eqref{eq-u} of $f\bigpar{\xi_{\ga_1},\dots,\xi_{\ga_k}}$
for some function $f$.)
For fixed $m$, the general theory of \citet{Hoeffding} applies
and yields asymptotic normality.
(Cf.\ \cite[Section 4]{SJ287} for a related problem.)
For $m\to \infty$ (our main interest), we can still use the 
orthogonal decomposition
of \cite{Hoeffding}, which in our case takes the following form.

By the definitions in \refS{Snot} and \eqref{gfa}, 
\begin{equation}
  Y_\ga 
= \prodjm \bigpar{p_{w_j}\qw\indic{\xi_{\ga_j}=w_j}}
= \prodjm\bigpar{\gf_{w_j}(\xi_{\ga_j})+1}.
\end{equation}
By multiplying out
this product, we obtain
\begin{equation}
  Y_\ga 
= \sum_{\gam\subseteq\mm}\prod_{j\in\gam}\gf_{w_j}(\xi_{\ga_j}).
\end{equation}
Hence,
\begin{equation}\label{Zx2}
  \Zx = \sum_{\ga\in\binom{\nn}m} Y_\ga
= \sum_{\ga\in\binom{\nn}m}
\sum_{\gam\subseteq\mm}\prod_{j\in\gam}\gf_{w_j}(\xi_{\ga_j})
= \sum_{\ga\in\binom{\nn}m} \sum_{\gam\subseteq\mm}
 \prod_{k=1}^{|\gam|}\gf_{w_{\gam_k}}(\xi_{\ga_{\gam_k}}).
\end{equation}
We rearrange this sum. First, let $\ell:=|\gam|\in\mm$, and consider all
terms with a given $\ell$. For each $\ga$ and $\gam$, with $|\gam|=\ell$,
let
\begin{equation}
  \ga_\gam:=\set{\ga_{\gam_1},\dots,\ga_{\gam_\ell}} \in \binom{\nn}{\ell}.
\end{equation}
For given $\gam\in\binom{\mm}\ell$ and $\gb\in\binom{\nn}\ell$, 
the number of $\ga\in\binom{\nn}{m}$ such that $\ga_\gam=\gb$ 
equals the number of ways to choose, for each $k\in[\ell+1]$, 
$\gam_k-\gam_{k-1}-1$ elements of $\ga$ in a gap of length
$\gb_k-\gb_{k-1}-1$,
where we define $\gb_0=\gam_0=0$ and $\gb_{\ell+1}=n+1$, $\gam_{\ell+1}=m+1$;
this number is 
\begin{equation}\label{cgb}
\cbg
:=
\prod_{k=1}^{\ell+1}\binom{\gb_{k}-\gb_{k-1}-1}{\gam_{k}-\gam_{k-1}-1}.
\end{equation}
Consequently, combining the terms in \eqref{Zx2} with the same $\ga_\gam$,
\begin{equation}\label{Nb0}
  \Zx 
= 
\sum_{\ell=0}^m
\sum_{\gam\in\binom{\mm}\ell}
\sum_{\gb\in\binom{\nn}\ell} 
\cbg
 \prod_{k=1}^{\ell}\gf_{w_{\gam_k}}(\xi_{\gb_{k}})
.
\end{equation}
We define, for $0\le \ell\le m$ and $\gb\in\binom{\nn}\ell$,
\begin{equation}\label{Vlgb}
  V_{\ell,\gb}
:= 
\sum_{\gam\in\binom{\mm}\ell}
\cbg
 \prod_{k=1}^{\ell}\gf_{w_{\gam_k}}(\xi_{\gb_{k}})
\end{equation}
and 
\begin{equation}\label{Vl}
V_\ell:=
\sum_{\gb\in\binom{\nn}\ell} V_{\ell,\gb}.
\end{equation}
Thus \eqref{Nb0} yields the decomposition
\begin{align}\label{Nb1}
  \Zx=
\sum_{\ell=0}^m V_\ell.
\end{align}
For $\ell=0$, $\binom{\nn}{0}$ contains only the empty set $\emptyset$,
and
\begin{align}
V_0=V_{0,\emptyset}=\binom{n}{m}=\E \Zx.  
\end{align}
Furthermore, note that two summands in \eqref{Nb0} with different $\gb$ are
orthogonal, as a consequence of \eqref{gf0} and independence of different
$\xi_i$. 
Consequently, the variables $V_{\ell,\gb}$ 
($\ell\in\mm$, $\gb\in\binom{\nn}{\ell}$)  are orthogonal, and hence
the variables $V_\ell$ ($\ell=0,\dots,m$) are orthogonal.

Let
\begin{equation}\label{vasa}
  \gss_\ell:=\Var(V_\ell)=\E V_\ell^2=\sum_{\gb\in\binom{\nn}\ell}\E V_{\ell,\gb}^2,
\qquad 1\le \ell\le m.
\end{equation}

 Note also that
by the combinatorial definition of $\cbg$ given before
\eqref{cgb}, we see that
\begin{align}\label{sumc2}
\sum_{\gb\in\binom{\nn}\ell}\cbg
 =\binom{n}{m}, 
\end{align}
since this is just the number of $\ga\in\binom{\nn}{m}$, and
\begin{align}\label{sumc}
\sum_{\gam\in\binom{\mm}\ell}\cbg
 =\binom{n-\ell}{m-\ell}, 
\end{align}
since this sum
is the total number of ways to choose
$m-\ell$ elements of the $n-\ell$ elements of $\ga$ in the gaps.

\subsection{The projection method}
We use the projection method used by \citet{Hoeffding} to prove asymptotic
normality for $U$-statistics. 
Translated to the present setting, 
the idea of the projection method 
is to approximate $\Zx-\E\Zx=\Zx-V_0$ by $V_1$, thus ignoring all 
terms with $\ell\ge2$ in the sum in \eqref{Nb1}.
In order to do this, we estimate variances.

First, by \eqref{lgf1} and
the independence of the $\xi_i$,
\begin{equation}\label{v1}
\Bignorm{\prod_{k=1}^{\ell}\gf_{w_{\gam_k}}(\xi_{\gb_{k}})}_2
=
\Bigpar{ \prod_{k=1}^{\ell}\E\bigabs{\gf_{w_{\gam_k}}(\xi_{\gb_{k}})}^2}\qq
\le B^{\ell/2}.
\end{equation}
By Minkowski's inequality, \eqref{Vlgb}, \eqref{v1} and \eqref{sumc},
\begin{align}\label{v2w0}
\bignorm{ V_{\ell,\gb}}_2
&\le 
\sum_{\gam\in\binom{\mm}\ell}\cbg B^{\ell/2}
=B^{\ell/2}\binom{n-\ell}{m-\ell}
\end{align}
or, equivalently,
\begin{align}\label{v2w}
\E V_{\ell,\gb}^2
\le
B^{\ell}\binom{n-\ell}{m-\ell}^2.
\end{align}
This leads to the following estimates.

\begin{lemma}
  \label{L0}
For $1\le\ell\le m$,
\begin{align}\label{l0}
\gss_\ell:=\E V_\ell^2& 
\le
\gxx_\ell:= 
B^\ell \binom{n}{\ell}\binom{n-\ell}{m-\ell}^2.
\end{align}
\end{lemma}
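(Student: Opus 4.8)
The plan is to bound $\gss_\ell=\E V_\ell^2$ by summing the already-established bound on the individual $\E V_{\ell,\gb}^2$ over all $\gb\in\binom{\nn}\ell$. Since the variables $V_{\ell,\gb}$ are orthogonal (as noted after \eqref{Nb0}), we have the exact identity \eqref{vasa}, namely $\gss_\ell=\sum_{\gb\in\binom{\nn}\ell}\E V_{\ell,\gb}^2$, so no cross-terms appear and the problem reduces to a termwise estimate.

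The key step is to insert \eqref{v2w}, which gives $\E V_{\ell,\gb}^2\le B^\ell\binom{n-\ell}{m-\ell}^2$ uniformly over $\gb$. Crucially, this bound does not depend on $\gb$, so summing over the $\binom{n}{\ell}=\bigabs{\binom{\nn}\ell}$ choices of $\gb$ just multiplies by that count:
\begin{align}
\gss_\ell=\sum_{\gb\in\binom{\nn}\ell}\E V_{\ell,\gb}^2
\le \sum_{\gb\in\binom{\nn}\ell} B^\ell\binom{n-\ell}{m-\ell}^2
=\binom{n}{\ell} B^\ell\binom{n-\ell}{m-\ell}^2,
\end{align}
which is exactly $\gxx_\ell$ as defined in \eqref{l0}. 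This completes the proof.

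I expect no real obstacle here: the lemma is essentially a one-line consequence of combining orthogonality with the uniform termwise bound \eqref{v2w}. The only thing to verify carefully is that \eqref{v2w} is genuinely uniform in $\gb$ — and indeed it is, because the right-hand side $B^\ell\binom{n-\ell}{m-\ell}^2$ came from \eqref{v2w0} via Minkowski's inequality together with the identity \eqref{sumc}, neither of which retains any dependence on the particular positions $\gb$. The Minkowski step was where the $\gb$-dependence was discarded (by passing to the worst-case $L^2$ bound $B^{\ell/2}$ on each product factor), so the count of $\gb$'s is the only remaining combinatorial ingredient, supplied by $\bigabs{\binom{\nn}\ell}=\binom{n}{\ell}$.
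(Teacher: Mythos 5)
Your proof is correct and is essentially identical to the paper's own argument: both use the orthogonality of the $V_{\ell,\gb}$ to write $\gss_\ell=\sum_{\gb}\E V_{\ell,\gb}^2$ as in \eqref{vasa}, then apply the uniform bound \eqref{v2w} and multiply by the count $\binom{n}{\ell}$ of sets $\gb$. No gaps.
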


\begin{proof}
 The definition of $V_\ell$ in \eqref{Vl} and \eqref{v2w} yield, since
the summands $V_{\ell,\gb}$ are orthogonal,
\begin{align}
\gss_\ell:=\E V_\ell^2& =
\sum_{\gb\in\binom{\nn}\ell}
\E V_{\ell,\gb}^2
\le 
\binom{n}{\ell}B^\ell\binom{n-\ell}{m-\ell}^2 ,
\end{align}
as needed.
\end{proof}

Note that, for $1\le\ell<m$,
\begin{align}\label{v2}
  \frac{\gxx_{\ell+1}}{\gxx_\ell}
=B\frac{\binom{n}{\ell+1}\binom{n-\ell-1}{m-\ell-1}^2}
{\binom{n}{\ell}\binom{n-\ell}{m-\ell}^2}
=B\frac{n-\ell}{\ell+1}\Bigparfrac{m-\ell}{n-\ell}^2
\le B\frac{m^2}{(\ell+1) n}.
\end{align}

\begin{lemma}\label{L1}
  If $m\le B\qqw n\qq$, then
\begin{align}
  \Var\bigpar{\Zx-V_1}
\le B^2 m^2\binom{n-1}{m-1}^2.
\end{align}
\end{lemma}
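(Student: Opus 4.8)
The plan is to bound $\Var(\Zx - V_1)$ by exploiting the orthogonal decomposition \eqref{Nb1}, which gives $\Zx - V_0 - V_1 = \sum_{\ell=2}^m V_\ell$, and then summing the variance bounds from \refL{L0}. Since the $V_\ell$ are orthogonal, we have $\Var(\Zx - V_1) = \sum_{\ell=2}^m \gss_\ell \le \sum_{\ell=2}^m \gxx_\ell$. The key is to show that this sum is dominated by its first term $\gxx_2$, and that $\gxx_2$ itself is bounded by $B^2 m^2 \binom{n-1}{m-1}^2$.

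First I would establish that the sequence $\gxx_\ell$ decays geometrically for $\ell \ge 2$ under the hypothesis $m \le B\qqw n\qq$. By \eqref{v2}, the ratio satisfies
\begin{align}
  \frac{\gxx_{\ell+1}}{\gxx_\ell} \le B\frac{m^2}{(\ell+1)n} \le \frac{B m^2}{n} \le 1,
\end{align}
where the last inequality uses precisely the assumption $m^2 \le B\qw n$. Hence $\gxx_\ell$ is (weakly) decreasing for $\ell \ge 1$, and more usefully, for $\ell \ge 2$ the ratio is at most $\frac{Bm^2}{3n} \le \tfrac13$, so $\sum_{\ell=2}^m \gxx_\ell \le \gxx_2 \sum_{k=0}^\infty 3^{-k} = \tfrac32 \gxx_2$.

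Next I would compute $\gxx_2$ explicitly and compare it to the target. From \eqref{l0},
\begin{align}
  \gxx_2 = B^2 \binom{n}{2}\binom{n-2}{m-2}^2.
\end{align}
The remaining task is the routine algebraic check that $\tfrac32 B^2 \binom{n}{2}\binom{n-2}{m-2}^2 \le B^2 m^2 \binom{n-1}{m-1}^2$; one rewrites $\binom{n-2}{m-2} = \frac{(m-1)(n-m+1)}{(n-1)(m-1)}\cdot\frac{(n-1)!}{(m-1)!(n-m)!}$ relative to $\binom{n-1}{m-1}$ and tracks the factor $\binom{n}{2} = \tfrac12 n(n-1)$ against $m^2$, using $m \le n$ and $m^2 \le B\qw n$ to absorb the constants.

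The main obstacle is the final constant-chasing: the clean target $B^2 m^2 \binom{n-1}{m-1}^2$ must genuinely absorb both the geometric-series constant $\tfrac32$ and the conversion factors between $\binom{n-2}{m-2}$ and $\binom{n-1}{m-1}$. I expect the binomial ratio to work in our favor (since $\binom{n-2}{m-2} \le \frac{m-1}{n-1}\binom{n-1}{m-1} \le \frac{m}{n}\binom{n-1}{m-1}$), so that $\gxx_2 \le B^2 \tfrac12 n^2 \cdot \frac{m^2}{n^2}\binom{n-1}{m-1}^2 = \tfrac12 B^2 m^2 \binom{n-1}{m-1}^2$, which already beats the target even before summing the tail; the factor $\tfrac32$ from the geometric series then still leaves room, and a slightly sharper ratio bound for $\ell\ge 2$ closes the gap cleanly.
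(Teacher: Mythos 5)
Your proposal is correct and follows essentially the same route as the paper's proof: orthogonality of the $V_\ell$, the bounds $\gss_\ell\le\gxx_\ell$ from Lemma~\ref{L0}, geometric decay of $\gxx_\ell$ via \eqref{v2} under the hypothesis $Bm^2\le n$, and a final binomial comparison of $\gxx_2$ with $m^2\binom{n-1}{m-1}^2$ (the paper uses ratio $\tfrac12$ and the identity $\binom{n-2}{m-2}=\frac{m-1}{n-1}\binom{n-1}{m-1}$ to get $2\gxx_2\le B^2m^2\binom{n-1}{m-1}^2$, while you use ratio $\tfrac13$ and the bound $\binom{n-2}{m-2}\le\frac mn\binom{n-1}{m-1}$; both close). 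One typo-level remark: the explicit rewriting of $\binom{n-2}{m-2}$ in your middle paragraph is garbled (it simplifies to $\frac{n-m+1}{n-1}\binom{n-1}{m-1}$, which is false), but the correct inequality you actually invoke in the last paragraph supersedes it.
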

\begin{proof}
  By \eqref{v2} and the assumption,
for $1\le\ell<m$,
\begin{align}\label{v22}
  \frac{\gxx_{\ell+1}}{\gxx_\ell}
\le \frac{1}{\ell+1} \le \frac12,
\end{align}
and thus, summing a geometric series,
\begin{align}
  \Var\bigpar{\Zx-V_1}
&=\sum_{\ell=2}^m \Var\bigpar{V_\ell}
\le\sum_{\ell=2}^m \gxx_\ell
\le\sum_{\ell=2}^m 2^{2-\ell}\gxx_2
\le2\gxx_2
\notag\\&
=B^2 n(n-1)\binom{n-2}{m-2}^2
\le B^2 m^2\binom{n-1}{m-1}^2.
\end{align}
\end{proof}

\subsection{The first term $V_1$}\label{Sfirst}

For $\ell=1$, we identify $\binom{\nn}\ell$ and $\nn$, and we write
$\VV{i}:=V_{1,\set i}$. Note that, by \eqref{cgb},
\begin{align}\label{cij}
  c(i,j):=c\bigpar{\set i,\set j} =\binom{i-1}{j-1}\binom{n-i}{m-j}.
\end{align}

\begin{remark}\label{RHG}
For later use, we  define also
  \begin{align}\label{pi}
    \pi(i,j):=\frac{c(i,j)}{c(1,1)}=\frac{c(i,j)}{\binom{n-1}{m-1}}.
  \end{align}
Then, for fixed $i$, $(\pi(i,j))_j$ is a (shifted) hypergeometric
distribution:
\begin{align}\label{hg}
\pi(i,j)=\P(X=j-1)= \frac{\binom{i-1}{j-1} \binom{n-i}{m-j}}{\binom{n-1}{m-1}}
\end{align}
which we write as
\begin{equation}
X\sim \HG\bigpar{n-1,m-1,i-1}.  
\end{equation}
\end{remark}

For $\ell=1$, 
\eqref{Vl} and \eqref{Vlgb} become
\begin{align}\label{w1}
  V_1=\sumin\VV i
\end{align}
with, using \eqref{cij},
\begin{align}\label{vvi}
  \VV i
=\sumjm c(i,j)\gf_{w_j}(\xi_i)
=\sumjm\binom{i-1}{j-1}\binom{n-i}{m-j}\gf_{w_j}(\xi_i).
\end{align}
Note that $\VVi$ is a function of $\xi_i$, and thus the random variables
$\VVi$ are independent. Furthermore, \eqref{gf0} implies $\E\VVi=0$. 
Let 
\begin{align}\label{tauu0}
\tauu_i:=\Var \VVi=\E\VVi^2.  
\end{align}
Then, see \eqref{vasa},
\begin{align}\label{tauu}
\gssw=
  \Var V_1 = \sumin \Var \VVi = \sumin \tauu_i.
\end{align}
Observe that it follows from \eqref{vvi} and \eqref{gfa} that 
\begin{equation}
\label{ws1}
\tauu_i=\sum_{a\in \cA} p_a^{-1} \left(\sum_{j:~w_j=a} \binom{i-1}{j-1}
\binom{n-i}{m-j}\right)^2
-\binom{n-1}{m-1}^2.
\end{equation}

Taking $\ell=1$ in \eqref{v2w} yields the upper bound
\begin{align}
  \label{tauuibound}
\tauu_i =\E \VVi^2\le B\binom{n-1}{m-1}^2,
\qquad i\in\nn.
\end{align}
Summing over $i$, or using \eqref{l0}, we obtain
\begin{align}\label{lol}
\gss_1:=\E V_1^2& 
\le
\gxx_1:= 
B n\binom{n-1}{m-1}^2.
\end{align}

\begin{remark}
 The upper bound \eqref{lol}, which is the case $\ell=1$ of \refL{L0},  
is achievable. Indeed, for $w=a \cdots a$,
by \eqref{ws1},
\begin{align}\label{cectau}
  \tau_i^2=(p_a^{-1}-1)\binom{n-1}{m-1}^2,
  \end{align}
and thus by \eqref{tauu},
  \begin{align}
\sigma_1^2=n (p_a^{-1}-1)\binom{n-1}{m-1}^2.
\label{cec}
\end{align}
Now choose $a$ to minimize $p_a$ and recall \eqref{B}.

We will see in \refL{Lk} that
the bound \eqref{lol} 
is sharp within a constant factor much more generally.
\end{remark}

We show also a general lower bound.
This too is sharp, see Section~\ref{sec-low}.

\begin{lemma}\label{L2}
  There exists $c,c'>0$ such that 
  \begin{align}\label{l2}
    \gss_1\ge \frac{c}{m}\gxx_1 = c'\frac{n}{m}\binom{n-1}{m-1}^2.
  \end{align}
\end{lemma}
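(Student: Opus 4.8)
The plan is to bound $\gss_1=\sum_{i=1}^n\tauu_i$ from below by bounding a single well-chosen term $\tauu_i$ (or a small block of terms) from below by something of order $\gxx_1/m = B n\binom{n-1}{m-1}^2/m$. Since $\gss_1\ge\tauu_i$ for any $i$ (all terms are nonnegative), it suffices to find one index $i$ for which $\tauu_i\ge c'' B\binom{n-1}{m-1}^2/m$ — wait, that only gives a single term of the right size, but the target \eqref{l2} is $\tfrac{c}{m}\gxx_1$, which is exactly $Bn\binom{n-1}{m-1}^2$ divided by $m$. So I actually need the \emph{sum} to be of order $n/m\cdot\binom{n-1}{m-1}^2$, meaning I should exhibit of order $n/m$ indices each contributing $\gtrsim\binom{n-1}{m-1}^2$, or more robustly lower-bound $\tauu_i$ uniformly over a positive fraction of the indices.

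**First I would** use the representation \eqref{ws1} together with the hypergeometric interpretation from \refR{RHG}. Writing $\tauu_i = \binom{n-1}{m-1}^2\bigpar{\sum_{a}p_a^{-1}(\sum_{j:w_j=a}\pi(i,j))^2 - 1}$ with $\pi(i,j)$ as in \eqref{pi}, the bracket is exactly $\E\bigpar{\sum_a r_a(i)\gf_a(\xi)}^2$ for the probability vector $r_a(i):=\sum_{j:w_j=a}\pi(i,j)$, by the computation in the proof of \refL{Lgf}\ref{Lgf3}. Indeed $\sum_j\pi(i,j)=1$, so $\sum_a r_a(i)=1$, and \eqref{lgf3} gives $\tauu_i\ge\binom{n-1}{m-1}^2\norme{\vr(i)-\vp}^2 = \binom{n-1}{m-1}^2\sum_a p_a^{-1}(r_a(i)-p_a)^2$. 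This reduces everything to showing $\sum_{i=1}^n\norme{\vr(i)-\vp}^2\ge c'n/m$.

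**The key step** is therefore to estimate $r_a(i)=\P(X_i\in\set{j-1:w_j=a})$ where $X_i\sim\HG(n-1,m-1,i-1)$, and to show these hypergeometric weights cannot all be close to $\vp$. The vector $\vr(i)$ is a smoothed version of the letter-frequency profile of $w$ seen through the hypergeometric kernel: as $i$ ranges over $\nn$, the kernel $\pi(i,\cdot)$ concentrates around $j\approx im/n$, so $\vr(i)$ roughly tracks the local composition of $w$ near position $im/n$. The clean way to force a lower bound is to compute the average over a block: for a band of consecutive indices the $\vr(i)$ average out to (close to) the global composition $\vq$, and by convexity $\frac1{|\text{band}|}\sum_i\norme{\vr(i)-\vp}^2\ge\norme{\bar{\vr}-\vp}^2$. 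The natural scale is that the hypergeometric has standard deviation of order $\sqrt{m}$ in the $j$-variable (equivalently width $\sim n/\sqrt m$ in the $i$-variable), so averaging over $\sim n/\sqrt m$ consecutive $i$'s gives a stable mean but the \emph{number} of nearly-independent bands is $\sim\sqrt m$, giving total $\sum_i\norme{\vr(i)-\vp}^2\gtrsim n/\sqrt m\cdot(\text{block deviation})$.

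**The hard part will be** handling the case where $w$ \emph{is} typical, i.e.\ $\vq\approx\vp$, since then the global average of $\vr(i)$ is close to $\vp$ and the convexity bound degenerates. The resolution must exploit the \emph{variance} of $\vr(i)$ across $i$ rather than its mean: even when the overall composition matches $\vp$, the hypergeometric smoothing cannot make every $\vr(i)$ simultaneously equal to $\vp$ unless $w$ is locally homogeneous everywhere, and a local homogeneity argument (comparing adjacent $\vr(i)$, or lower-bounding the total via a second-moment/Fourier estimate on the fluctuations of the letter pattern at scale $\sqrt m$) yields the surviving $c'n/m$. I would prove a decomposition $\sum_i\norme{\vr(i)-\vp}^2 = \sum_i\norme{\vr(i)-\bar{\vr}}^2 + n\norme{\bar{\vr}-\vp}^2$ (with $\bar{\vr}$ the $i$-average) and bound the first, fluctuation term below by $c'n/m$ using the spread of the hypergeometric kernel — this fluctuation estimate is the genuine analytic obstacle, and it is exactly what makes \eqref{l2} sharp rather than the cruder $\gss_1\gtrsim\binom{n-1}{m-1}^2$ one would get from a single term.
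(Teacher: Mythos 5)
Your reduction is correct and consistent with the paper's formulas: with $r_a(i):=\sum_{j:w_j=a}\pi(i,j)$ one indeed has
$\tauu_i=\binom{n-1}{m-1}^2\sum_{a}p_a^{-1}\bigpar{r_a(i)-p_a}^2\ge\binom{n-1}{m-1}^2\norme{\vr(i)-\vp}^2$,
and (since $\sum_i\pi(i,j)=n/m$ for each $j$) the $i$-average of $\vr(i)$ is exactly $\vq$, so your bias--variance identity holds with $\bar\vr=\vq$. The gap is the key step you defer as ``the genuine analytic obstacle'': the claim that the fluctuation term $\sum_i\norme{\vr(i)-\vq}^2$ is bounded below by $c'n/m$ via ``fluctuations of the letter pattern at scale $\sqrt m$'' in the bulk. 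That mechanism is not only unproved, it is wrong. For $w=a^m$ the fluctuation term is identically zero (every $\vr(i)$ is the point mass at $a$). Worse, for the alternating string $w=0101\dotsm$ with $p_0=p_1=\tfrac12$ — your ``hard case'' $\vq=\vp$ — the bulk contributes essentially nothing: by \refL{LHG}, $|r_0(i)-r_1(i)|\le\exp\bigpar{-2\Var X_i}$ with $X_i\sim\HG(n-1,m-1,i-1)$, and $\Var X_i\gg1$ for all $i$ with $\min(i,n-i)\gg n/m$, so every bulk term $\norme{\vr(i)-\vp}^2$ is exponentially small even though nothing interesting happens ``at scale $\sqrt m$''. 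In both examples the bound \eqref{l2} survives only because of the \emph{edges}: for $i\le 1+\gd n/m$ the weight $\pi(i,\cdot)$ concentrates on $j=1$ (indeed $\sum_{j\ge2}\pi(i,j)\le2\gd$), so $\vr(i)$ is within $O(\gd)$ of the point mass at $w_1$, whose distance to $\vp$ is at least $1-\max_a p_a>0$; summing over these $\asymp\gd n/m$ indices already yields $cn/m$. A Fourier/second-moment argument aimed at the bulk cannot produce this, so your plan as stated does not close.

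This edge phenomenon is exactly what the paper's (shorter) proof exploits, working directly in $L^2$ without passing to $\vr(i)$: split $\VVi=c(i,1)\gf_{w_1}(\xi_i)+\VVxi$, observe that for $i\le1+\gd n/m$ one has $c(i,1)\ge(1-2\gd)c(1,1)$ and hence $\sum_{j\ge2}c(i,j)\le2\gd c(1,1)$, so Minkowski gives $\norm{\VVxi}_2\le2\gd B^{1/2}c(1,1)$ while $\norm{c(i,1)\gf_{w_1}(\xi_i)}_2\ge\tfrac12c_1c(1,1)$; choosing $\gd$ small (depending only on $c_1$ and $B$) makes $\tauu_i\ge c_2c(1,1)^2$ on the whole edge range, and summing over the $\asymp\gd n/m$ edge indices finishes. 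Your write-up is salvageable with a one-line replacement: instead of the bulk-fluctuation claim, prove $\norme{\vr(i)-\vp}\ge c$ for $i\le1+\gd n/m$ by the concentration of $\pi(i,\cdot)$ at $j=1$. Note finally that the sharpness of \eqref{l2} (\refT{TLlow}) comes from the bulk being exponentially negligible for the alternating string — i.e., only the edges ever contribute at the $n/m$ scale — not from scale-$\sqrt m$ fluctuations being exactly of this order.
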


\begin{proof}
We consider the first term in the sum in \eqref{vvi} separately, and write
\begin{align}\label{magnus}
  \VVi=c(i,1)\gf_{w_1}(\xi_i)+\VVxi,
\end{align}
where
\begin{align}\label{vvxi}
  \VVxi:=
 \sum_{j=2}^mc(i,j)\gf_{w_j}(\xi_i).
\end{align}
We have, by \eqref{cij}, $c(i,1)=\binom{n-i}{m-1}$. Consequently, for any
$i\in\nn$, 
\begin{align}\label{winston}
  \frac{c(i,1)}{c(1,1)}
&=\frac{\binom{n-i}{m-1}}{\binom{n-1}{m-1}}
=\frac{\prod_{k=0}^{m-2}(n-i-k)}{\prod_{k=0}^{m-2}(n-1-k)}
=\prod_{k=0}^{m-2}\Bigpar{1-\frac{i-1}{n-1-k}}
\notag\\&
\ge 1-\sum_{k=0}^{m-2}\frac{i-1}{n-1-k}
\ge 1-\frac{m(i-1)}{n-m+1}.
\end{align}

Let $\gd\le1/4$ be a fixed small positive number, chosen later.
Assume that $i\le 1+\gd n/m$. In particular, 
either $i=1$ or $m\le m(i-1)\le\gd n < n/2$,
and thus \eqref{winston} implies
\begin{align}\label{anna}
  \frac{c(i,1)}{c(1,1)}
\ge 1-\frac{m(i-1)}{n-m}
\ge 1-\frac{\gd n}{n/2}=1-2\gd.
\end{align}
By \eqref{sumc}, \eqref{anna} implies
\begin{align}
  \sum_{j=2}^mc(i,j)=\binom{n-1}{m-1}-c(i,1)=c(1,1)-c(i,1)
\le 2\gd c(1,1).
\end{align}
Hence, by \eqref{vvxi}, Minkowski's inequality and \eqref{lgf1}, 
\cf{} \eqref{v2w0},
\begin{align}\label{gabriel}
\bignorm{\VVxi}_2
&\le 
\sum_{j=2}^m c(i,j)\bignorm{\gf_{w_j}(\xi_i)}_2
\le \sum_{j=2}^m c(i,j) B^{1/2}
\le 2\gd B^{1/2} c(1,1).
\end{align}
Furthermore,  \eqref{lgf2} and
\eqref{anna} yield
\begin{align}\label{olof}
  \bignorm{c(i,1)\gf_{w_1}(\xi_i)}_2
\ge c(i,1) c_1 \ge c_1(1-2\gd)c(1,1)\ge \tfrac12 c_1c(1,1).
\end{align}
Finally, \eqref{magnus} and the triangle inequality yield, using
\eqref{olof} and \eqref{gabriel},
\begin{align}
\bignorm{\VVi}_2\ge \bignorm{c(i,1)\gf_{w_1}(\xi_i)}_2-\bignorm{\VVxi}_2
\ge \bigpar{\tfrac12 c_1-2\gd B\qq}c(1,1).
\end{align}
We now choose $\gd:= c_1/(8B\qq)$, and find that for some $c_2>0$,
\begin{align}
  \tauu_i:=\bignorm{\VVi}_2^2\ge c_2c(1,1)^2,
\qquad i\le 1+\gd n/m.
\end{align}
Consequently, by \eqref{tauu},
\begin{align}
  \gss_1 =\sumin\tauu_i \ge \frac{\gd n}{m} c_2 c(1,1)^2
  =c_3\frac{n}{m}\binom{n-1}{m-1}^2. 
\end{align}
This proves \eqref{l2}, with $c':=c_3$ and $c=c'/B$.
\end{proof}

\begin{lemma}\label{LV}
  Suppose that $m=o(n)$. Then $V_1$ is asymptotically normal:
  \begin{align}\label{lw}
    V_1/\gs_1 \dto N(0,1).
  \end{align}
\end{lemma}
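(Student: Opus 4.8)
The plan is to apply the Lindeberg--Feller central limit theorem for triangular arrays of independent random variables to the decomposition $V_1=\sumin\VVi$ established in \eqref{w1}. The key structural facts are already in place: by the remark following \eqref{vvi}, the summands $\VVi$ are independent (each depends only on $\xi_i$), centered (by \eqref{gf0}, $\E\VVi=0$), and their variances sum to $\gss_1=\sumin\tauu_i$ by \eqref{tauu}. After normalizing by $\gs_1$, it therefore suffices to verify the Lindeberg condition: for every $\eps>0$,
\begin{align}
\frac{1}{\gss_1}\sumin \E\bigsqpar{\VVi^2\,\indic{|\VVi|>\eps\gs_1}}\to0.
\end{align}

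First I would obtain a uniform pointwise bound on $|\VVi|$. Since $\gf_a(x)=p_a\qw\indic{x=a}-1$ takes values in $[-1,\px\qw-1]=[-1,B]$ and is thus bounded by $B$ in absolute value, \eqref{vvi} gives $|\VVi|\le B\sumjm c(i,j)=B\binom{n-1}{m-1}$ by \eqref{sumc} with $\ell=1$ (the sum over $\gam\in\binom{\mm}{1}$ of $c(\set i,\gam)$ equals $\binom{n-1}{m-1}$). Call this bound $M:=B\binom{n-1}{m-1}$. The plan is then to compare $M$ with $\gs_1$: by \refL{L2}, $\gss_1\ge c'\frac{n}{m}\binom{n-1}{m-1}^2$, so
\begin{align}
\frac{M^2}{\gss_1}\le\frac{B^2\binom{n-1}{m-1}^2}{c'\frac nm\binom{n-1}{m-1}^2}=\frac{B^2}{c'}\cdot\frac mn\to0
\end{align}
under the hypothesis $m=o(n)$. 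Hence $M/\gs_1\to0$, which means that for every fixed $\eps>0$ and all large $n$ the event $\{|\VVi|>\eps\gs_1\}$ is empty for every $i$ (since $|\VVi|\le M<\eps\gs_1$). The Lindeberg sum is then identically zero for large $n$, so the condition holds trivially.

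With the Lindeberg condition verified, Lindeberg--Feller yields $V_1/\gs_1\dto N(0,1)$, which is \eqref{lw}. The main point to get right — and the only place the hypothesis $m=o(n)$ is actually used — is the comparison $M^2/\gss_1\to0$, which rests squarely on the lower bound of \refL{L2}; this is what guarantees that no single summand $\VVi$ dominates the sum. I expect no genuine obstacle here, since the boundedness of $\gf_a$ makes the Lindeberg condition degenerate (the truncation throws away nothing for large $n$); the substance of the argument has already been isolated into \refL{L2}.
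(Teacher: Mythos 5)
Your proof is correct and follows essentially the same route as the paper's: both apply a classical CLT for the independent, centered summands $\VVi$ in $V_1=\sumin\VVi$, and both extract the crucial smallness from \refL{L2}, which turns the comparison of the largest summand with $\gss_1$ into the ratio $O(m/n)=o(1)$. The only difference is in the verification step: you check the Lindeberg condition using the deterministic bound $|\VVi|\le \max(1,B)\binom{n-1}{m-1}$ (so the truncated sums vanish identically for large $n$; note the bound is $\max(1,B)$, which equals $B$ since $\px\le\tfrac12$), whereas the paper checks Lyapunov's third-moment condition via the equivalence of the $L^2$ and $L^3$ norms on the finite-dimensional space of functions of $\xi_i$ — both verifications are routine once \refL{L2} is available.
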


\begin{proof}
We  show that the central limit theorem applies to the sum
$V_1=\sum_i \VVi$ in \eqref{w1}. The terms $\VVi$ are independent and have
means $\E \VVi=0$. We verify Lyapunov's condition.

The random variable $\xi$ is defined on some probability space $(\gO,\cF,P)$
and takes values in the finite set $\cA$. Thus the
linear space $\cV$ of functions $\gO\to\bbR$ of 
the form $f(\xi)$ has finite dimension $|\cA|$.
Moreover, every function in $\cV$ is bounded.
The $L^2$ and $L^3$ norms $\norm\cdot_2$ and $\norm\cdot_3$ are thus finite
on $\cV$, and are thus both norms
on the finite-dimensional vector space $\cV$; hence there exists a constant
$C$ such that for any function $f$,
\begin{align}
  \norm{f(\xi)}_3\le C \norm{f(\xi)}_2.
\end{align}
In particular, since the definition \eqref{vvi} shows that $\VVi$ is a function
of $\xi_i\eqd \xi$,
\begin{align}\label{Cv}
  \norm{\VVi}_3\le C \norm{\VVi}_2 = C \tau_i,
\qquad 1\le i\le n.
\end{align}

Furthermore, by \eqref{tauuibound} and \eqref{l2},
\begin{align}\label{eleonora}
  \frac{\max_i\tauu_i}{\gss_1}
\le \frac{B\binom{n-1}{m-1}^2}{c'\frac{n}{m}\binom{n-1}{m-1}^2}
=C \frac{m}{n} =o(1).
\end{align}

Consequently, using \eqref{Cv}, \eqref{tauu} and \eqref{eleonora},
\begin{align}
  \frac{\sumin \E|\VVi|^3}{\gs_1^3}
&
=\frac{\sumin \norm{\VVi}_3^3}{\gs_1^3}
\le \frac{C\sumin \tau_i^3}{\gs_1^3}
\le C\frac{\max_i\tau_i\sumin \tau_i^2}{\gs_1^3}
\notag\\&
= C\frac{\max_i\tau_i}{\gs_1}
=o(1).
\end{align}
This shows the Lyapunov condition, and thus a standard form of the central
limit theorem, \cite[Theorem 7.2.4 or 7.6.2]{Gut}, yields
\eqref{lw}.
\end{proof}

\subsection{Proofs of Theorem~\ref{T2a} and \ref{Tka}}

We next prove a general theorem showing asymptotic normality under some
conditions.

\begin{theorem}\label{T1}
  Suppose that \ntoo{} and that 
  \begin{align}\label{t1b}
m^2\binom{n-1}{m-1}^2 
= o\bigpar{\gss_1}.
  \end{align}
Then
\begin{align}\label{t1var}
\Var \Z = p_w^2\Var\Zx \sim p_w^2\gss_1
  \end{align}
and
  \begin{align}
\label{t1Zx}
  \frac{\Zx-\E \Zx}{\gsw}&\dto N(0,1),
\\
\label{t1N}
  \frac{\Z-\E \Z}{(\Var \Z)\qq}&=
  \frac{\Zx-\E \Zx}{(\Var \Zx)\qq}\dto N(0,1).
\end{align}
\end{theorem}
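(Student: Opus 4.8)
The plan is to prove \refT{T1} by assembling the pieces already established. The key identity is the orthogonal decomposition \eqref{Nb1}, which gives $\Zx - \E\Zx = \Zx - V_0 = \sum_{\ell=1}^m V_\ell$. Since the $V_\ell$ are orthogonal, I can split this into the leading term $V_1$ and a remainder $R := \Zx - \E\Zx - V_1 = \sum_{\ell=2}^m V_\ell$, and the strategy is to show that $R$ is asymptotically negligible compared with $V_1$, so that the normalized sum has the same limit as $V_1/\gsw$, which is handled by \refL{LV}.

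First I would verify that the hypothesis \eqref{t1b} puts us in a regime where the earlier lemmas apply. From \eqref{lol} we have $\gss_1 \le B n \binom{n-1}{m-1}^2$, so \eqref{t1b} forces $m^2 = o(Bn)$, i.e. $m = o(n\qq)$; in particular $m = o(n)$, so \refL{LV} is available and gives $V_1/\gsw \dto N(0,1)$. The condition $m = o(n\qq)$ also gives $m \le B\qqw n\qq$ for large $n$, so \refL{L1} applies and yields
\begin{align}
  \Var(R) = \Var\bigpar{\Zx - V_1} \le B^2 m^2 \binom{n-1}{m-1}^2.
\end{align}
Combining this bound with the hypothesis \eqref{t1b} directly gives $\Var(R) = o(\gss_1)$.

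Next I would establish the variance asymptotics \eqref{t1var}. Since $V_1$ and $R$ are orthogonal with mean zero, $\Var\Zx = \gss_1 + \Var(R)$, and the bound just obtained gives $\Var\Zx = \gss_1 + o(\gss_1) \sim \gss_1$. The relation $\Var\Z = p_w^2 \Var\Zx$ is immediate from $\Z = p_w\Zx$, so \eqref{t1var} follows. For the distributional statement \eqref{t1Zx}, I write
\begin{align}
  \frac{\Zx - \E\Zx}{\gsw} = \frac{V_1}{\gsw} + \frac{R}{\gsw}.
\end{align}
The second term has $\E(R/\gsw) = 0$ and $\Var(R/\gsw) = \Var(R)/\gss_1 = o(1)$, so by Chebyshev's inequality $R/\gsw \pto 0$. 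Since $V_1/\gsw \dto N(0,1)$ by \refL{LV}, Slutsky's theorem yields $(\Zx - \E\Zx)/\gsw \dto N(0,1)$, proving \eqref{t1Zx}. Finally, \eqref{t1N} follows because $\Z - \E\Z = p_w(\Zx - \E\Zx)$ and $(\Var\Z)\qq = p_w(\Var\Zx)\qq \sim p_w\gsw$ by \eqref{t1var}; replacing the normalization $\gsw$ by the asymptotically equivalent $(\Var\Zx)\qq$ is another Slutsky step.

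Honestly, this theorem is the easy assembly stage: every hard estimate has been front-loaded into \refLs{L1}, \ref{L2} and \ref{LV}, and all that remains here is the standard projection-method bookkeeping (orthogonality, a variance bound, Chebyshev, and Slutsky). The only point requiring a moment's care is checking that the single hypothesis \eqref{t1b} simultaneously guarantees the applicability of both \refL{L1} (which needs $m \le B\qqw n\qq$) and \refL{LV} (which needs $m = o(n)$); both follow from \eqref{t1b} via the upper bound \eqref{lol} on $\gss_1$, as noted above. The real content of the paper — deriving Theorems~\ref{T2a} and~\ref{Tka} — will come from verifying that the concrete hypothesis \eqref{t1b} holds in those two regimes, using the lower bound \refL{L2} together with the structural assumptions on $w$.
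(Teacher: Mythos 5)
Your proposal is correct and follows essentially the same route as the paper's own proof: bound $\Var(\Zx-V_1)$ by \refL{L1}, note it is $o(\gss_1)$ by \eqref{t1b}, then get \eqref{t1var} from orthogonality and \eqref{t1Zx}, \eqref{t1N} from Chebyshev plus Slutsky together with \refL{LV}. Your explicit check that \eqref{t1b} combined with the upper bound \eqref{lol} forces $m=o(n\qq)$ — so that the hypotheses of both \refL{L1} and \refL{LV} are indeed satisfied — is a detail the paper's proof leaves implicit, and it is worth keeping.
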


\begin{proof}
By \refL{L1} and \eqref{t1b},
\begin{align}\label{ask}
  \Var\Bigpar{\frac{\Zx-V_1}{\gsw}}
= \frac{  \Var\xpar{\Zx-V_1}}{\gssw}
\le B^2 \frac{m^2\binom{n-1}{m-1}^2}{\gssw}=o(1).
\end{align}
Hence, recalling $\E V_1=0$,
\begin{align}\label{eva}
\frac{\Zx-\E\Zx-V_1}{\gsw}
\pto0.
\end{align}
Combining \eqref{lw} and \eqref{eva}, we obtain \eqref{t1Zx}.

Furthermore, by \eqref{ask}, and since the terms in \eqref{Nb1} are orthogonal,
\begin{align}
  \Var\Zx = \Var V_1 + \Var\bigpar{\Zx-V_1} =\gssw + o(\gssw)
\sim \gssw,
\end{align}
which  yields \eqref{t1var}, and also shows that
we may replace $\gs_1$ by $(\Var\Zx)\qq$ in \eqref{t1Zx},
which yields \eqref{t1N};
the equality in \eqref{t1N} is a trivial
consequence of \eqref{Zx1}.
\end{proof}


Now we are ready to prove our main results. 


\begin{proof}[Proof of \refT{T2a}]
  By \refL{L2},
  \begin{align}
    \frac{m^2\binom{n-1}{m-1}^2}{\gss_1}
\le C \frac{m^3}{n} = o(1).
  \end{align}
Thus \eqref{t1b} holds, and the result follows by \refT{T1} together with
\eqref{EZ} and \eqref{Zx1}.
\end{proof}

Recall that in \refT{Tka},
the range of $m$ is improved, assuming that $w$ is \emph{not} typical for the
random source with probabilities $\vp=(p_a)_{a\in\cA}$ that we consider.

\begin{proof}[Proof of \refT{Tka}]
  By \refT{T1}, with \eqref{t1b} verified by \refL{Lk} below.
\end{proof}

\begin{lemma}\label{Lk}
Let $\vq=(q_a)_{a\in\cA}$ be the proportions of the letters in $w$.
Then
\begin{align}\label{kk}
\gss_1
\ge \frac{m^2}{n}\binom{n}{m}^2\norme{\vq-\vp}^2
={n}\binom{n-1}{m-1}^2\norme{\vq-\vp}^2.
\end{align}
\end{lemma}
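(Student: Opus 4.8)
The plan is to bound each variance $\tauu_i=\E\VVi^2$ from below by regrouping \eqref{vvi} according to letters and applying \refL{Lgf}\ref{Lgf3}, and then to sum over $i$ by a convexity (Cauchy--Schwarz) argument after identifying the average of the resulting probability vectors with $\vq$. First I would collect the positions of $w$ carrying a common letter: writing $C_i(a):=\sum_{j:\,w_j=a}c(i,j)$, the definition \eqref{vvi} becomes $\VVi=\sum_{a\in\cA}C_i(a)\gf_a(\xi_i)$. By \eqref{sumc} with $\ell=1$ we have $\sum_{a\in\cA}C_i(a)=\sumjm c(i,j)=\binom{n-1}{m-1}=c(1,1)$, so $\vr^{(i)}:=\bigpar{C_i(a)/c(1,1)}_{a\in\cA}$ is a probability vector. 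Then \refL{Lgf}\ref{Lgf3} gives, for each $i$,
\begin{align}
\tauu_i=c(1,1)^2\,\E\Bigpar{\sum_{a\in\cA}\frac{C_i(a)}{c(1,1)}\gf_a(\xi_i)}^2\ge c(1,1)^2\,\norme{\vr^{(i)}-\vp}^2.
\end{align}

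The key step is to show that the vectors $\vr^{(i)}$ average exactly to $\vq$. For this I would use \eqref{sumc2} with $\ell=1$, which gives $\sumin c(i,j)=\binom{n}{m}$ for each fixed $j$, together with the identity $\binom{n}{m}/\binom{n-1}{m-1}=n/m$. These yield
\begin{align}
\sumin\frac{C_i(a)}{c(1,1)}=\frac{1}{c(1,1)}\sum_{j:\,w_j=a}\sumin c(i,j)=\frac{n}{m}\,\bigabs{\set{j:w_j=a}}=n q_a,
\end{align}
so that $\frac1n\sumin\vr^{(i)}=\vq$. Then convexity of the squared Euclidean norm (equivalently Cauchy--Schwarz) gives
\begin{align}
\sumin\norme{\vr^{(i)}-\vp}^2\ge\frac1n\Bignorm{\sumin\bigpar{\vr^{(i)}-\vp}}^2=\frac1n\bignorm{n\vq-n\vp}^2=n\,\norme{\vq-\vp}^2.
\end{align}
Combining this with the per-$i$ bound and \eqref{tauu} produces $\gss_1=\sumin\tauu_i\ge n\,c(1,1)^2\,\norme{\vq-\vp}^2=n\binom{n-1}{m-1}^2\norme{\vq-\vp}^2$; the second form in \eqref{kk} then follows from $\binom{n-1}{m-1}=\frac{m}{n}\binom{n}{m}$.

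The main obstacle is conceptual rather than technical: one has to recognize that although each position $j$ of $w$ enters $\VVi$ with a position-dependent hypergeometric weight $\pi(i,j)$ (see \refR{RHG}), these weights are normalized so that summing over all text positions $i$ flattens them to the uniform weight $n/m$, whence the weighted letter distributions $\vr^{(i)}$ average precisely to the empirical frequencies $\vq$. Once \eqref{sumc2} supplies this averaging identity, the lower bound is immediate from \refL{Lgf}\ref{Lgf3} and the elementary inequality $\sumin\norme{x_i}^2\ge \frac1n\norme{\sumin x_i}^2$.
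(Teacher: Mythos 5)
Your proof is correct and follows essentially the same route as the paper's: the same three ingredients --- the projection bound \refL{Lgf}\ref{Lgf3}, the column-sum identity \eqref{sumc2} (which turns the sum over text positions into the letter frequencies $\vq$), and Cauchy--Schwarz over the $n$ positions --- produce the identical bound \eqref{kk}. The only difference is the order of operations: you apply \refL{Lgf}\ref{Lgf3} per position and then Cauchy--Schwarz to the vectors $\vr^{(i)}-\vp$, whereas the paper applies Cauchy--Schwarz first, to the functions $\psi_i(x):=\sum_{j}c(i,j)\gf_{w_j}(x)$, and then invokes \refL{Lgf}\ref{Lgf3} once for the aggregate $\sum_i\psi_i$.
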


\begin{proof}
  Let 
  \begin{align}\label{psi}
    \psi_i(x):=\sumjm c(i,j)\gf_{w_j}(x).
  \end{align}
Thus \eqref{vvi} is $\VVi=\psi_i(\xi_i)$, and \eqref{tauu} is,
since $\E\psi_i(\xi)=0$,
\begin{align}
  \gss_1
=\Var V_1
=\sumin\E\bigsqpar{\psi_i(\xi_i)^2}
=\E\sumin\psi_i(\xi)^2.
\end{align}
Hence, by the \CSineq,
\begin{align}\label{ka}
  n\gss_1
= 
n\E\sumin\psi_i(\xi)^2
\ge \E \Bigpar{\sumin\psi_i(\xi)}^2.
\end{align}
Furthermore, by \eqref{psi} and \eqref{sumc2}
\begin{align}\label{kb}
  \sumin \psi_i(x)
=\sumin\sumjm c(i,j)\gf_{w_j}(x)
=\sumjm \binom nm\gf_{w_j}(x)
=\binom{n}{m}\sumaa mq_a\gf_a(x).
\end{align}
Hence, \eqref{lgf3} yields
\begin{align}\label{kc}
\Bignormm{\sumin \psi_i(\xi)}
=m\binom{n}{m}\Bignormm{\sumaa q_a\gf_a(\xi)}
\ge m\binom{n}{m}\norme{\vq-\vp}.
\end{align}
Combining \eqref{ka} and \eqref{kc} yields \eqref{kk}.
\end{proof}

\section{Some Special Cases}
In this section we consider two interesting cases. 
In the first we assume that the pattern $w$ is alternating and
in the second case we consider random $w$. 

\subsection{Alternating $w$}\label{Salt}
\label{sec-low}
As an extreme example,
we consider alternating $w$, that is, $w=010101 \ldots$ for $\cA=\{0,1\}$.
We prove that this case matches the general lower bound \eqref{l2} in \refL{L2}.
\begin{theorem}
  \label{TLlow}
Consider the unbiased binary case $\cA=\setoi$ and  $p_0=p_1=\frac12$,
and let $w$ be an alternating string $010101\dots$
Then, for any $m\le n/2$,
\begin{align}\label{alt2}
  \gss_1\le 10\frac{n}{m} \binom{n-1}{m-1}^2.
\end{align}
\end{theorem}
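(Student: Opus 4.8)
The plan is to compute $\gss_1=\sumin\tauu_i$ directly for the alternating pattern using the closed form \eqref{ws1}, and then bound the sum. For the unbiased binary case, $p_0=p_1=\tfrac12$, so $p_a\qw=2$ for both letters and $B=\px\qw-1=1$. The key simplification is that in the alternating pattern the positions $j$ with $w_j=0$ are exactly the odd indices (say), and those with $w_j=1$ are the even indices, so each inner sum $\sum_{j:w_j=a}\binom{i-1}{j-1}\binom{n-i}{m-j}$ splits the full sum $\sumjm\binom{i-1}{j-1}\binom{n-i}{m-j}=\binom{n-1}{m-1}$ (Vandermonde) into its odd-$j$ and even-$j$ parts. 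Writing $S_i^{\mathrm{odd}}$ and $S_i^{\mathrm{even}}$ for these two partial sums, we have $S_i^{\mathrm{odd}}+S_i^{\mathrm{even}}=\binom{n-1}{m-1}$, and \eqref{ws1} becomes
\begin{align}
  \tauu_i = 2\bigpar{(S_i^{\mathrm{odd}})^2+(S_i^{\mathrm{even}})^2}-\binom{n-1}{m-1}^2
= \bigpar{S_i^{\mathrm{odd}}-S_i^{\mathrm{even}}}^2,
\end{align}
using $2(x^2+y^2)-(x+y)^2=(x-y)^2$. So the whole problem reduces to estimating the \emph{signed} Vandermonde-type sum $D_i:=S_i^{\mathrm{odd}}-S_i^{\mathrm{even}}=\sumjm(-1)^{j-1}\binom{i-1}{j-1}\binom{n-i}{m-j}$ and showing $\sumin D_i^2\le 10\frac{n}{m}\binom{n-1}{m-1}^2$.

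Next I would find a clean formula or bound for the alternating sum $D_i$. The generating-function identity $\sum_j (-1)^{j}\binom{i-1}{j}\binom{n-i}{m-1-j}=[t^{m-1}](1-t)^{i-1}(1+t)^{n-i}$ is the natural tool: $D_i$ is (up to sign) the coefficient of $t^{m-1}$ in $(1-t)^{i-1}(1+t)^{n-i}$. This is a Krawtchouk-type coefficient, and the point is that the alternating signs produce heavy cancellation, making $|D_i|$ much smaller than the unsigned total $\binom{n-1}{m-1}$ except when $i$ is near the ends. I would aim to show that $|D_i|$ is comparable to $\binom{n-1}{m-1}$ only for $i$ within $O(n/m)$ of $1$ or of $n$, and decays rapidly away from the endpoints, which is exactly the behaviour needed so that $\sumin D_i^2$ picks up only about $n/m$ terms of size $\binom{n-1}{m-1}^2$, matching the claimed bound and the lower bound \eqref{l2}.

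The main obstacle will be controlling $D_i$ uniformly in $i$ and summing the squares with an explicit constant no larger than $10$. I expect the cleanest route is to establish a recursion or a ratio estimate for $D_i$ in $i$ (for fixed $m,n$), or equivalently to analyze $(1-t)^{i-1}(1+t)^{n-i}$ near $t=1$ where $(1-t)^{i-1}$ suppresses the coefficient. A workable strategy is to bound $|D_i|\le \binom{n-1}{m-1}\rho^{\,i-1}$-style for an appropriate contraction factor coming from the $(1-t)$ factor, or to compare $D_i$ with $D_1=\binom{n-1}{m-1}$ through the one-step transition $i\to i+1$ and show the partial sums form a rapidly decaying sequence. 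The delicate part is that the decay is not monotone and the coefficient can change sign, so rather than chasing signs I would bound $\sumin D_i^2$ by $\sumin |D_i|\cdot\max_i|D_i|\le \binom{n-1}{m-1}\sumin|D_i|$ after first showing $\max_i|D_i|\le\binom{n-1}{m-1}$ (which holds since each $S_i^{\mathrm{odd}},S_i^{\mathrm{even}}\ge 0$ and their sum is $\binom{n-1}{m-1}$), and then proving the $\ell^1$-type bound $\sumin|D_i|\le 10\frac{n}{m}\binom{n-1}{m-1}$. This last inequality, establishing that the total signed mass summed over $i$ is concentrated on an interval of length $O(n/m)$ at each end, is where the real work lies and where the explicit constant must be extracted; the condition $m\le n/2$ should be used precisely to keep the endpoint intervals of length $\sim n/m$ from overlapping and to validate the decay estimate.
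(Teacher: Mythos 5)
Your opening reduction is correct, and it is in fact identical to the paper's: with $p_0=p_1=\tfrac12$, formula \eqref{ws1} together with the identity $2(x^2+y^2)-(x+y)^2=(x-y)^2$ gives $\tauu_i=D_i^2$, where $D_i=\sumjm(-1)^{j-1}\binom{i-1}{j-1}\binom{n-i}{m-j}$ is (up to sign) exactly the quantity $\tau_i$ in \eqref{tau} that the paper analyzes; your observation that $|D_i|\le\binom{n-1}{m-1}$ is also sound. However, everything after that is a plan rather than a proof. The decisive step --- a quantitative estimate showing that $|D_i|$ decays exponentially once $i$ is more than order $n/m$ from the endpoints, or your substitute $\ell^1$ bound $\sumin|D_i|\le 10\tfrac{n}{m}\binom{n-1}{m-1}$ --- is precisely what you yourself flag as ``where the real work lies,'' and no argument for it is given. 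Neither the coefficient identity $D_i=[t^{m-1}](1-t)^{i-1}(1+t)^{n-i}$, nor the suggested recursion/ratio analysis, nor the proposed contraction bound $|D_i|\le\binom{n-1}{m-1}\rho^{\,i-1}$ is carried out, and none of them is routine. This missing estimate is the entire content of the theorem: without decay in $i$, the trivial bound $\sumin D_i^2\le n\binom{n-1}{m-1}^2$ is off by a factor of $m$.

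For comparison, the paper closes exactly this gap by a short probabilistic argument: after normalizing, $|D_i|/\binom{n-1}{m-1}=|\E(-1)^X|$ with $X\sim\HG(n-1,m-1,i-1)$ hypergeometric (Remark~\ref{RHG}), and \refL{LHG} gives $|\E(-1)^X|\le\exp(-2\Var X)$. That lemma rests on the theorem of Vatutin and Mikha{\u\i}lov \cite{VatutinM} that the probability generating function of a hypergeometric variable has only negative real zeroes, so $X$ is distributed as a sum of independent Bernoulli variables, for which the bound is immediate factor by factor. This yields $|\tau_i|\le\binom{n-1}{m-1}\exp\bigpar{-(i-1)m/(8n)}$ for $i\le(n+1)/2$ (here $m\le n/2$ is used), and then summing the \emph{squares} of this geometrically decaying sequence gives \eqref{alt2} with room to spare. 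Note also that your $\ell^1$ route $\sumin D_i^2\le\max_i|D_i|\cdot\sumin|D_i|$ is lossy: even granting the paper's decay estimate, it produces roughly $16\tfrac{n}{m}\binom{n-1}{m-1}^2$ rather than $10\tfrac{n}{m}\binom{n-1}{m-1}^2$, so you would want to sum squares directly in any case. If you prefer to keep your analytic (Krawtchouk-coefficient) route, it can be made to work, but you would need a genuine saddle-point or zero-counting estimate of the same exponential strength, which is substantially more work than the hypergeometric argument.
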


\begin{proof}
  It is slightly more convenient to let $\cA=\set{\pm1}$; 
thus we consider $w=w_1\cdots w_m$ with 
\begin{align}
  \label{walt}
w_j=(-1)^j
\end{align}
in the unbiased case $p_1=p_{-1}=\frac12$. 
Then, by \eqref{gfa},
for $x\in\cA$,
\begin{align}
  \gf_1(x)&=2 \cdot \indic{x=1}-1=x
\\
\gf_{-1}(x)&=2 \cdot \indic{x=-1}-1=-x,
\end{align}
and thus, for $a,x\in\cA$,
\begin{align}\label{ax}
  \gf_a(x)=ax.
\end{align}

By \eqref{vvi}, \eqref{ax} and \eqref{walt},
\begin{align}\label{selma}
  V_{1,i}
=\sumjm c(i,j){w_j}\xi_i
=\sumjm (-1)^jc(i,j)\xi_i
=:\tau_i\xi_i,
\end{align}
where we thus define
\begin{align}\label{tau}
  \tau_i:=\sumjm (-1)^jc(i,j)
=\sumjm(-1)^j\binom{i-1}{j-1}\binom{n-i}{m-j}.
\end{align}
Note that \eqref{selma} gives 
$\E V_{1,i}^2=\tau_i^2$, 
so \eqref{tau} is consistent with our earlier definition  \eqref{tauu0}.
(The sign of $\tau_i$ is irrelevant for our purposes.)
By \eqref{tau} and \eqref{pi}--\eqref{hg}, we have, with
$\pi(i,j)$ and 
$X\sim\HG(n-1,m-1,i-1)$, as defined in Remark~\ref{RHG},
\begin{align}
\frac{  -\tau_i}{\binom{n-1}{m-1}} 
= \sumjm (-1)^{j-1}\pi(i,j)
=\sumjm(-1)^{j-1}\P(X=j-1)
=\E (-1)^X.
\end{align}
By \refL{LHG} below, this implies, for $2\le m\le n/2$ and $1\le i\le (n+1)/2$,
\begin{align}
  \frac{\abs{\tau_i}}{\binom{n-1}{m-1}} 
\le \exp \Bigpar{-\frac{(i-1)(n-i)(m-1)(n-m)}{(n-1)^2(n-2)}}
\le \exp \Bigpar{-\frac{(i-1)m}{8n}}.
\end{align}
This enables us to conclude, using the symmetry $|\tau_i|=|\tau_{n+1-i}|$ and
still assuming $2\le m\le n/2$, that
\begin{align}
\gss_1=
\sum_{i=1}^n\tau^2_i &
\le 2 \sum_{i=1}^{\ceil{n/2}}\tau^2_i 
\le2\binom{n-1}{m-1}^2 \sum_{i=1}^{\infty}e^{-(i-1)m/4n}
\notag\\&
=  \frac{2}{1-e^{-m/4n}} \binom{n-1}{m-1}^2
\le  \frac{10n}{m} \binom{n-1}{m-1}^2,
\end{align}
as claimed in \eqref{alt2}. 
The case $m=1$ is trivial by \eqref{lol}, with $B=1$ by \eqref{B}.
\end{proof}

\begin{lemma}\label{LHG}
Suppose that  $X$ is a hypergeometric random variable
 $X\sim\HG(n,k,\ell)$. Then
 \begin{align}\label{expo}
   \bigabs{\E(-1)^{X}} \le \exp\bigpar{-2\Var X}
=\exp\Bigpar{-2\frac{k(n-k)\ell(n-\ell)}{n^2(n-1)}}.
 \end{align}
\end{lemma}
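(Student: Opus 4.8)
The plan is to start from an exact generating-function identity for $\E(-1)^X$ and then extract the Gaussian-type decay by a contour (Laplace) estimate, pinning the constant through the elementary Bernoulli case. Realize $X$ as $X=|A\cap B|$, where $A\subseteq\nn$ is fixed with $|A|=k$ and $B$ is a uniformly random $\ell$-subset of $\nn$; setting $c_i=-1$ for $i\in A$ and $c_i=+1$ otherwise, one has $(-1)^X=\prod_{i\in B}c_i$, whence
\begin{equation*}
\E(-1)^X=\binom n\ell^{-1}\sum_{|S|=\ell}\prod_{i\in S}c_i
=\binom n\ell^{-1}[z^\ell]\prod_{i=1}^n(1+c_iz)
=\binom n\ell^{-1}[z^\ell](1-z)^k(1+z)^{n-k}.
\end{equation*}
The three symmetries of the right-hand side ($k\leftrightarrow n-k$, $\ell\leftrightarrow n-\ell$, and $k\leftrightarrow\ell$, the last since $\HG(n,k,\ell)=\HG(n,\ell,k)$) match those of $\Var X=\frac{k(n-k)\ell(n-\ell)}{n^2(n-1)}$, so it suffices to prove the bound in a canonical range, say $1\le k\le\ell\le n/2$.

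To estimate the coefficient I would pass to the tilted representation $\E(-1)^X=\E\bigsqpar{(-1)^U\mid U+V=\ell}$, with $U\sim\Bin(k,q)$ and $V\sim\Bin(n-k,q)$ independent and $q\in(0,1)$ free; taking $q=\ell/n$ centres the conditioning at the mode of $U+V\sim\Bin(n,q)$. Fourier inversion of the conditioning gives $\E(-1)^X=N/D$ with $D=\P(\Bin(n,q)=\ell)\sim(2\pi nq(1-q))\qqw$ and
\begin{equation*}
N=\frac1{2\pi}\int_{-\pi}^{\pi}e^{-i\ell\theta}(1-q-qe^{i\theta})^{k}(1-q+qe^{i\theta})^{n-k}\dd\theta,
\end{equation*}
whose integrand has modulus $\bigsqpar{1-2q(1-q)(1+\cos\theta)}^{k/2}\bigsqpar{1-2q(1-q)(1-\cos\theta)}^{(n-k)/2}$. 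Bounding $|N|\le\frac1{2\pi}\int|\cdot|\dd\theta$ and applying Laplace's method around the maximiser $\theta=0$ (for $k\le n-k$) yields $|\E(-1)^X|\le (C+o(1))\,|1-2q|^{k}$ with $C=O(1)$, and then
\begin{equation*}
|1-2q|^{k}=\bigpar{1-4q(1-q)}^{k/2}\le e^{-2kq(1-q)}\le e^{-2\Var X},
\end{equation*}
the last step because $2\Var X=2kq(1-q)\frac{n-k}{n-1}\le 2kq(1-q)$. The second-order term producing the factor $2$ is the elementary Bernoulli identity $(1-2p)^2=1-4p(1-p)\le e^{-4p(1-p)}$, i.e. $|\E(-1)^{\Be(p)}|\le e^{-2p(1-p)}$; the case $k=1$ is exactly this.

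The hard part is that the bound is sharp only at the Bernoulli endpoint. Away from it the decay rate exceeds $2\Var X$ by $\Delta=2k(k-1)q(1-q)/(n-1)$, and this slack must absorb the (constant) Laplace prefactor $C$; this works once $\Delta$ exceeds $\log C$, but as $\Var X\to0$ (i.e.\ $k$ or $\ell$ small, or $q(1-q)$ small) one has $\Delta\to0$, $e^{-2\Var X}\approx1$, and the saddle-point approximation degenerates, so the $O(1)$ prefactor cannot be paid for. These boundary cases must instead be handled by the exact identity. A clean device is the sign-preserving recursion
\begin{equation*}
\E(-1)^{X_{n,k,\ell}}=\tfrac{n-\ell}{n}\,\E(-1)^{X_{n-1,k-1,\ell}}-\tfrac{\ell}{n}\,\E(-1)^{X_{n-1,k-1,\ell-1}},
\end{equation*}
obtained by conditioning on whether a fixed element of $A$ lies in $B$; here one must retain the cancellation encoded in the minus sign (a naive triangle inequality is too lossy, as it discards precisely the cancellation that drives the decay), inducting on $k$ from the exact base cases $k\in\set{0,1}$. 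Finally I would check that the second-order Taylor coefficient of the log-integrand reproduces the finite-population variance $\frac{k(n-k)\ell(n-\ell)}{n^2(n-1)}$, including the correction factor $\frac{n-\ell}{n-1}$, which is where the precise constant in the exponent is fixed.
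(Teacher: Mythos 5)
Your proposal has two genuine gaps. First, the central saddle-point estimate is false as stated: the bound $\bigabs{\E(-1)^X}\le (C+o(1))\,|1-2q|^{k}$ presumes that the maximum of the integrand modulus $\bigsqpar{1-2q(1-q)(1+\cos\theta)}^{k/2}\bigsqpar{1-2q(1-q)(1-\cos\theta)}^{(n-k)/2}$ sits at $\theta=0$, but differentiating the log shows $\theta=0$ is a local \emph{minimum} whenever $k>(n-k)(1-2q)^2$; the maximum then lies at an interior $\theta$ and is strictly larger than $|1-2q|^k$. The failure is stark at $q=\ell/n=1/2$, where your claimed bound is identically $0$, yet e.g.\ $n=4$, $k=\ell=2$ gives $\E(-1)^X=\tfrac16-\tfrac46+\tfrac16=-\tfrac13\neq0$ (the paper's bound $e^{-2\Var X}=e^{-2/3}$ is fine here). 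Second, the regime you correctly flag as inaccessible to Laplace asymptotics (small $\Var X$, where no $O(1)$ prefactor is affordable — and note that even in the ``good'' regime your $\Delta>\log C$ condition leaves an uncovered intermediate range) is deferred to an induction on $k$ via the recursion $\E(-1)^{X_{n,k,\ell}}=\tfrac{n-\ell}{n}\E(-1)^{X_{n-1,k-1,\ell}}-\tfrac{\ell}{n}\E(-1)^{X_{n-1,k-1,\ell-1}}$, but this is only a gesture: you yourself observe the triangle inequality discards the essential cancellation, and the inductive hypothesis (a bound on \emph{moduli}) gives you nothing about the signs or relative magnitudes of the two subterms, which is exactly what the minus sign requires. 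As written, the inductive step cannot be closed.

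What is missing is a structural input that the paper uses to make the whole problem factor. By a theorem of Vatutin and Mikha{\u\i}lov, the probability generating function of $X\sim\HG(n,k,\ell)$ has only negative real zeros, so $X\eqd\sum_i I_i$ with \emph{independent} $I_i\sim\Be(r_i)$. Then your own Bernoulli kernel, $|1-2p|\le e^{-2p(1-p)}$, applies factorwise:
\begin{align}
\bigabs{\E(-1)^X}=\prod_i\abs{1-2r_i}\le\prod_i\bigpar{1-2r_i(1-r_i)}
\le\exp\Bigpar{-2\sum_i r_i(1-r_i)}=\exp\bigpar{-2\Var X},
\end{align}
exactly and uniformly in all parameters — no asymptotics, no boundary cases, no prefactor to absorb. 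You had the right one-dimensional inequality and a correct generating-function identity (from which real-rootedness could also be extracted); the decisive missing idea is the decomposition of $X$ into independent Bernoullis, which converts the additivity of $\Var X$ into multiplicativity of the bound.
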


Note that the expectation in \eqref{expo} is the difference of the
probabilities that $X$ is even or odd.

\begin{proof}
  By (a special case of) a theorem by \citet{VatutinM},
the probability generating function of $X$ has only negative real zeroes,
and thus there exist probabilities $r_i\in\oi$, $i=1,\dots,k$, such that
if $I_i\sim\Be(r_i)$ are independent indicator variables, then
$\sum_i I_i$ has the same distribution as $X$, \ie,
\begin{align}\label{vat}
  X \eqd \sum_i I_i.
\end{align}
Hence, with $s_i:=1-r_i$, 
\begin{align}
  \E(-1)^X = \E (-1)^{\sum_i I_i}
=\prod_i \E (-1)^{I_i}
=\prod_i (s_i-r_i)
\end{align}
and thus, using also $\Var X=\sum_i\Var I_i$ by \eqref{vat},
\begin{align}
\bigabs{  \E(-1)^X}
&=\prod_i |s_i-r_i|
=\prod_i(1-2\min\set{r_i,s_i})
\le \prod_i(1-2{r_is_i})
\notag\\&
\le \exp\Bigpar{-2\sum_i r_is_i}
=\exp\Bigpar{-2\sum_i\Var I_i}
\notag\\&
=\exp\bigpar{-2\Var X}.
\end{align}
This yields \eqref{expo} by
the standard formula
\begin{align}
  \label{varHG}
\Var X = \frac{k(n-k)\ell(n-\ell)}{n^2(n-1)}.
\end{align}
This completes the proof.
\end{proof}

\subsection{A random $w$}
\label{sec-random}

\refT{Tka} applies when $w$ is far from a typical string $\Xi_m$ from our
random source.
In this subsection we consider the opposite case, \ie, when $w$ is
like $\Xi_m$.
More precisely, we consider the case when $w=W$ is a random string, of a given
length $m$, drawn from the same source; thus $W\eqd\Xi_m$, but $W$
is independent of $\Xi_n$.
(We use capital $W$ to emphasize that the string is random.)
We think of this as a two-stage random experiment. First 
we sample $W$; then we sample $\Xi$. Conditioned on $W=w$, we thus have the
same situation as before. 

We write, for example, $\gss_1(w)$ to indicate the
dependence on $w$; thus $\gss_1(W)$ is a random variable.
The next theorem shows that $\gss_1(W)$ is concentrated about a value that is
roughly the  geometric mean of the upper and lower bounds in \eqref{lol} and
\eqref{l2}. 
\begin{theorem}
  \label{TLrandom}
Let $W\eqd\Xi_m$. Then, for $n\ge1$ and $1\le m\le n/2$, 
\begin{align}\label{lrandom}
  \E[ \gss_1(W)] = \Theta\biggpar{\frac{n}{\sqrt m}\binom{n-1}{m-1}^2}.
\end{align}
Furthermore, if also $m,n\to\infty$, then
\begin{align}\label{lprandom}
  \frac{\gss_1(W)}{  \E[ \gss_1(W)]} \to1
\end{align}
in probability. 
\end{theorem}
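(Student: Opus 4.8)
The plan is to compute the first two moments of $\sigma_1^2(W)$ over the random word $W$ and then invoke Chebyshev's inequality; the $\Theta$ in \eqref{lrandom} will come from the first moment, and \eqref{lprandom} from showing the variance to be of strictly smaller order than the square of the mean. With $c(i,j)$ as in \eqref{cij}, put $D(j,j'):=\sum_{i=1}^n c(i,j)c(i,j')$, a deterministic symmetric matrix independent of $W$. A direct computation from \eqref{gfa} gives $\E\bigsqpar{\gf_a(\xi)\gf_b(\xi)}=p_a\qw\indic{a=b}-1=:g(a,b)$. Since $\tau_i^2=\E\bigsqpar{\psi_i(\xi)^2}$ with $\psi_i$ as in \eqref{psi}, expanding the square and using $\sigma_1^2=\sum_i\tau_i^2$ (see \eqref{tauu}) gives
\begin{align}\label{plan-bilin}
\sigma_1^2(W)=\sum_{j,j'=1}^m D(j,j')\,g(w_j,w_{j'}).
\end{align}

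Since the letters are \iid, $\E[g(w_j,w_j)]=\sum_a p_a(p_a\qw-1)=|\cA|-1$, while $\E[g(w_j,w_{j'})]=\sum_a p_a-1=0$ for $j\ne j'$; hence $\E[\sigma_1^2(W)]=(|\cA|-1)\sum_j D(j,j)$, the off-diagonal part cancelling. Writing $D(j,j)=\binom{n-1}{m-1}^2\sum_i\pi(i,j)^2$ with $\pi(i,j)$ as in \eqref{hg} and interchanging the order of summation,
\begin{align}\label{plan-mean}
\sum_j D(j,j)=\binom{n-1}{m-1}^2\sum_{i=1}^n Q_i,\qquad Q_i:=\sum_{j=1}^m\pi(i,j)^2,
\end{align}
where $Q_i$ is the collision probability of the law $\HG(n-1,m-1,i-1)$ of \refR{RHG}. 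The crux of the first moment is then the two-sided bound $\sum_i Q_i=\Theta(n/\sqrt m)$, which I would derive from $Q_i=\Theta\bigpar{(1+v_i)\qqw}$ with $v_i=\Var\HG(n-1,m-1,i-1)$ given by \eqref{varHG}, followed by the comparison $\sum_i(1+v_i)\qqw\asymp\int_0^n\bigpar{1+cm\,x(n-x)/n^2}\qqw\dd x=\Theta(n/\sqrt m)$, the $O(n/m)$ mass of the two edge zones where $v_i=O(1)$ being of lower order. This proves \eqref{lrandom}.

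For the variance, expanding \eqref{plan-bilin} gives
\begin{align}\label{plan-var}
\Var[\sigma_1^2(W)]=\sum_{j_1,j_1',j_2,j_2'}D(j_1,j_1')D(j_2,j_2')\Cov\bigpar{g(w_{j_1},w_{j_1'}),g(w_{j_2},w_{j_2'})}.
\end{align}
The key observation is that, since $\E\gf_a(\xi)=0$, conditioning on any single letter shared by the two index pairs annihilates one factor; consequently each covariance is $O(1)$ and vanishes unless the unordered pairs $\{j_1,j_1'\}$ and $\{j_2,j_2'\}$ coincide. Only these matched quadruples survive, and they contribute at most $C\sum_{j,j'}D(j,j')^2$, so $\Var[\sigma_1^2(W)]\le C\sum_{j,j'}D(j,j')^2$.

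It remains to show $\sum_{j,j'}D(j,j')^2=o\bigpar{(\E[\sigma_1^2(W)])^2}$. For this I would use three facts from \eqref{sumc}--\eqref{sumc2}: the row sums are constant, $\sum_{j'}D(j,j')=\binom{n-1}{m-1}\binom nm=:R$; the diagonal is uniformly bounded, $D(j,j)\le R$ (because $\pi(i,j)\le1$ and $\sum_i\pi(i,j)=n/m$); and $D(j,j')\le\sqrt{D(j,j)D(j',j')}\le\sqrt{R\,D(j,j)}$ by the \CSineq{} in the variable $i$. Chaining these,
\begin{align}\label{plan-final}
\sum_{j,j'}D(j,j')^2\le R\sum_j\Bigpar{\max_{j'}D(j,j')}\le R^{3/2}\sum_j\sqrt{D(j,j)}\le R^{3/2}\sqrt m\Bigpar{\sum_j D(j,j)}\qq,
\end{align}
and inserting $R=\tfrac nm\binom{n-1}{m-1}^2$ together with $\sum_j D(j,j)=\Theta\bigpar{\tfrac n{\sqrt m}\binom{n-1}{m-1}^2}$ gives $\sum_{j,j'}D(j,j')^2=O\bigpar{\tfrac{n^2}{m^{5/4}}\binom{n-1}{m-1}^4}$. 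Since $(\E[\sigma_1^2(W)])^2=\Theta\bigpar{\tfrac{n^2}{m}\binom{n-1}{m-1}^4}$, the ratio $\Var/\E^2$ is $O(m\qqqqw)\to0$, and \eqref{lprandom} follows from Chebyshev's inequality. I expect the main obstacle to be the sharp collision estimate $\sum_i Q_i=\Theta(n/\sqrt m)$: its upper bound is routine from a bound on the maximal hypergeometric probability, but the matching lower bound requires a local limit theorem for $\HG$, most cleanly via the representation of $\HG$ as a sum of independent Bernoulli variables already used in \refL{LHG}.
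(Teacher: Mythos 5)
Your proposal is correct, and its skeleton is the same as the paper's: the bilinear form $\gss_1(W)=\sum_{j,j'}D(j,j')g(W_j,W_{j'})$ (the paper's $\rho$ is your $g$, and its $\sum_i\pi(i,j)\pi(i,k)$ is your $D(j,k)$ up to the factor $\binom{n-1}{m-1}^2$, see \eqref{willy} and \eqref{vary}), the observation that only the diagonal survives in expectation, the reduction of $\Var[\gss_1(W)]$ to $C\sum_{j,j'}D(j,j')^2$ via vanishing covariances for non-coinciding index pairs (this claim is correct: disjoint pairs are independent, and a pair sharing exactly one index dies upon conditioning on the shared letter, since $\E g(a,\xi)=0$), and Chebyshev at the end. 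The genuine differences lie in how the two estimates are executed. For the collision probabilities you flag $\sum_i Q_i=\Theta(n/\sqrt m)$ as the main obstacle and propose a local limit theorem via the Bernoulli representation of \refL{LHG} for the lower bound; that route works, but it is heavier than necessary --- the paper gets the lower bound elementarily: by Chebyshev at least $3/4$ of the hypergeometric mass lies within $2\gam$ of the mean, and Cauchy--Schwarz on that window gives $Q_i\ge c/(1+\gam)$, while the upper bound is $Q_i\le\max_j\pi(i,j)\le C(n/(mi))\qq$ via Stirling, as you suggest. Where you genuinely depart is the variance: the paper bounds $\sum_{j,k}\bigpar{\sum_i\pi(i,j)\pi(i,k)}^2$ through Gaussian-type pointwise bounds $\pi(i,j)\le Cj\qqw e^{-C(j-j_0)^2/(j+j_0)}$ (presented with ``some calculations'' and ``omitting the details''), obtaining relative variance $O(\log m/\sqrt m)$; your chain --- the exact row-sum identity $\sum_{j'}D(j,j')=R=\binom{n}{m}\binom{n-1}{m-1}$, the diagonal bound $D(j,j)\le R$, and Cauchy--Schwarz twice --- uses only combinatorial identities and $\pi(i,j)\le 1$, and gives relative variance $O(m\qqqqw)$. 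Your rate is weaker, but entirely sufficient for \eqref{lprandom} since $m\to\infty$, and your variance argument is more elementary and more self-contained than the paper's. In short: same architecture and same first moment; an unnecessarily strong tool proposed for the lower bound on $Q_i$ (an elementary alternative exists and is what the paper uses); and a cleaner, if slightly lossier, second-moment estimate that trades the paper's sharp hypergeometric local bounds for exact identities plus Cauchy--Schwarz.
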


\begin{corollary}
\label{cor-random}
For random $w=W\eqd\Xi_m$, \eqref{t1b} holds for $m=o\bigpar{n^{2/5}}$
with high probability,
and hence
for a typical pattern $w$ the number of $w$ occurrences $Z$ is
asymptotically normal 
as long as $m=o\bigpar{n^{2/5}}$. More precisely, in this case
\begin{equation}
  Z/\E Z \sim \ASN\Bigpar{1,\frac{\E[ \gss_1(W)]}{\E^2[ Z]}} 
\end{equation}
with $\E [Z] = \binom{n}{m} 2^{-mh + \Op(m\qq)}$ 
where $h=-\sum_{a\in \cA} p_a \log p_a$ is the source entropy.
\end{corollary}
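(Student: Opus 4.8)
The plan is to condition on the random pattern $W$ and reduce the whole statement to the deterministic machinery of \refT{T1}, feeding in the two conclusions of \refT{TLrandom} to control the only $w$-dependent quantity, namely $\gss_1(W)$. First I would note that, for a given pattern $w$, the hypothesis \eqref{t1b} of \refT{T1} is exactly the requirement that
\[
R_n(w):=\frac{m^2\binom{n-1}{m-1}^2}{\gss_1(w)}\to0 .
\]
By the order estimate \eqref{lrandom} together with the concentration \eqref{lprandom}, with high probability $\gss_1(W)$ is of order $\frac{n}{\sqrt m}\binom{n-1}{m-1}^2$, and hence $R_n(W)$ is of order $m^{5/2}/n$. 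Thus $R_n(W)\pto0$ precisely when $m^{5/2}=o(n)$, i.e.\ when $m=o\bigpar{n^{2/5}}$; this shows that \eqref{t1b} holds with high probability in exactly the claimed range.

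Next I would upgrade the conditional central limit theorem to an unconditional one. The key point is that the asymptotic normality of $V_1$ in \refL{LV} is in fact \emph{uniform} in $w$: the Lyapunov ratio there is bounded, via \eqref{tauuibound} and the \emph{universal} lower bound \eqref{l2} of \refL{L2}, by $C\sqrt{m/n}$ for every admissible $w$, so a Berry--Esseen estimate gives a Kolmogorov distance between the law of $V_1(w)/\gs_1(w)$ and $N(0,1)$ that is $O\bigpar{\sqrt{m/n}}$ uniformly in $w$. Since an unconditional law is a $W$-mixture of the conditional laws, this uniform bound transfers directly, giving $V_1(W)/\gs_1(W)\dto N(0,1)$ unconditionally. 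For the remainder I would use \refL{L1}: conditionally on $W$ we have $\E\bigsqpar{(\Zx-\E\Zx-V_1)^2\mid W}\le B^2 m^2\binom{n-1}{m-1}^2$, so the conditional second moment of $(\Zx-\E\Zx-V_1)/\gs_1(W)$ is at most $B^2R_n(W)$; a conditional Markov inequality followed by bounded convergence (using $R_n(W)\pto0$) then yields $(\Zx-\E\Zx-V_1)/\gs_1(W)\pto0$ unconditionally. Combining the two pieces by Slutsky gives $(\Zx-\E\Zx)/\gs_1(W)\dto N(0,1)$, and since $\Zx=p_W\qw Z$ and $\Var(Z\mid W)\sim p_W^2\gss_1(W)$ by \eqref{t1var} (valid with high probability), this is the asserted asymptotic normality. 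Equivalently $Z/\E[Z\mid W]\sim\ASN\bigpar{1,\gss_1(W)/\binom nm^2}$, and replacing $\gss_1(W)$ by $\E[\gss_1(W)]$ via \eqref{lprandom} gives the stated limiting variance.

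Finally, for the mean I would use $\E[Z\mid W]=\binom nm p_W$ and write $-\log_2 p_W=\sum_{j=1}^m\bigpar{-\log_2 p_{W_j}}$, a sum of \iid{} terms with common mean $\sum_{a\in\cA}\bigpar{-p_a\log_2 p_a}=h$ and finite variance (as $\cA$ is finite and each $p_a>0$). The central limit theorem then gives $-\log_2 p_W=mh+\Op\bigpar{\sqrt m}$, i.e.\ $p_W=2^{-mh+\Op(\sqrt m)}$ and $\E[Z\mid W]=\binom nm\,2^{-mh+\Op(\sqrt m)}$, as claimed.

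I expect the main obstacle to be precisely the passage from the conditional to the unconditional limit law: the statement ``\eqref{t1b} holds with high probability'' is not by itself sufficient, since asymptotic normality concerns the entire sequence and $W=W^{(n)}$ is itself random. The resolution is to exploit the uniformity of the Lyapunov/Berry--Esseen bound for $V_1$ (available for \emph{every} $w$ thanks to the universal lower bound of \refL{L2}) and to dispose of the negligible remainder $\Zx-\E\Zx-V_1$ through the conditional second-moment argument above; everything probabilistic about the random $w$ then enters only through $R_n(W)\pto0$ and the variance concentration \eqref{lprandom}.
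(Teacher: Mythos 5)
Your proposal is correct, and it follows the route the paper intends for this corollary (which is stated without an explicit proof): combine \refT{TLrandom} -- so that with high probability $\gss_1(W)=\Theta\bigpar{n m^{-1/2}\binom{n-1}{m-1}^2}$, whence \eqref{t1b} holds with high probability exactly when $m^{5/2}=o(n)$ -- with \refT{T1}, and obtain the mean estimate from the central limit theorem for $-\log p_W$. The one place where you go beyond the paper is the conditional-to-unconditional passage, and you are right that it is needed: \refT{T1} is stated for deterministic word sequences, so ``\eqref{t1b} holds with high probability'' does not by itself deliver an unconditional limit law. Your fix -- a Berry--Esseen bound for $V_1$ that is uniform in $w$ (legitimate, since \eqref{tauuibound} and the lower bound of \refL{L2} hold with constants not depending on $w$, giving Kolmogorov distance $O\bigpar{\sqrt{m/n}}$), plus the conditional Chebyshev/bounded-convergence argument for $\Zx-\E\Zx-V_1$, and finally \eqref{lprandom} to replace $\gs_1(W)$ by $\bigpar{\E[\gss_1(W)]}^{1/2}$ -- is a correct and clean way to close that gap, which the paper leaves implicit.
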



\begin{proof}[Proof of Theorem~\ref{TLrandom}]
  Define the covariance matrix
\begin{align}
  \rho(a,b):=\Cov\bigpar{\gf_a(\xi),\gf_b(\xi)}
,\qquad a,b\in\cA.
\end{align}
We have already computed $\rho(a,a)=p_a\qw-1$ in \eqref{lgf1p}.
Similarly, in general,
 recalling \eqref{gfa},
\begin{align}\label{rhoab}
  \rho(a,b)
=p_a\qw p_b\qw\Cov\bigpar{\indic{\xi=a},\indic{\xi=b}}
=p_a\qw\indic{a=b}-1  
.\end{align}

By \eqref{vvi}, for a given string $w$,
\begin{align}
  \tauu_i(w)
&=\Var \VV{i} 
=\sumjm\sumkm c(i,j)c(i,k)\Cov\bigpar{\gf_{w_j}(\xi_i),\gf_{w_k}(\xi_i)}
\notag\\&
=\sumjm\sumkm c(i,j)c(i,k)\rho\bigpar{w_j,w_k},
\end{align}
where $\rho\bigpar{w_j,w_k}=\Cov\bigpar{\gf_{w_j}(\xi_i),\gf_{w_k}(\xi_i)}$.
Thus, by \eqref{tauu},
\begin{align}\label{wille}
\gss_1(w)
&=\sumin  \tauu_i(w)
=\sumin\sumjm\sumkm c(i,j)c(i,k)\rho\bigpar{w_j,w_k}.
\end{align}

Now let $w=W$ be random, with $W\eqd\Xi_m$. Then, the letters
$W_j$ are \iid{} with $W_j\eqd \xi$.
In particular, it follows from \eqref{rhoab} that for any fixed $a$,
\begin{align}\label{willu}
  \E \rho(W_j,a)=\E \rho(a,W_j)=0,
\end{align}
and thus $\E\rho(W_j,W_k)=0$ when $j\neq k$, while
\begin{align}\label{willw}
  \E\rho(W_j,W_j)=\sumaa p_a\bigpar{p_a\qw-1}=|\cA|-1 =: A_1.
\end{align}
Consequently, taking the expectation in \eqref{wille} and recalling \eqref{pi},
\begin{align}\label{willy}
\E[ \gss_1(W)]
&
=\sumin\sumjm\sumkm c(i,j)c(i,k)\E\rho\bigpar{W_j,W_k}
=A_1\sumin\sumjm c(i,j)^2
\notag\\&
=A_1\binom{n-1}{m-1}^2\sumin\sumjm \pi(i,j)^2.
\end{align}
where $\pi(i,j)$ is defined in Remark~\ref{RHG}.
We thus want to estimate the final double sum.

First, fix $i$ and recall from \eqref{hg} that $(\pi(i,j))_j$ is the
probability distribution of $X+1$ with $X\sim\HG(n-1,m-1,i-1)$.
Let $\mu:=\E X+1$ and $\gamm:=\Var X$.
By Chebyshev's inequality,
\begin{align}
    \sum_{|j-\mu|> 2\gam}\pi(i,j)=\P\bigpar{|X+1-\mu|>2\gam}\le \frac{1}4,
\end{align}
and thus by the \CSineq,
\begin{align}
  \frac{9}{16}
\le \Bigpar{\sum_{|j-\mu|\le 2\gam}\pi(i,j)}^2
\le (4\gam+1)\sum_{|j-\mu|\le 2\gam}\pi(i,j)^2.
\end{align}
Furthermore, see \eqref{varHG},
$\gamm=\Var X\le im/n$. Hence,
\begin{align}
  \sumjm\pi(i,j)^2 
\ge \frac{C}{\gam+1}
\ge C \min\bigpar{\gam\qw,1}
\ge C \min\Bigpar{\Bigparfrac{n}{mi}\qq,1}.
\end{align}
Summing over $n/2\le i \le n$, say, yields
\begin{align}\label{goa}
  \sumin  \sumjm\pi(i,j)^2 \ge C\frac{n}{m\qq}.
\end{align}

In the opposite direction, we again fix $i$ and note that
\begin{align}\label{gobi}
    \sumjm\pi(i,j)^2 
\le \max_j\pi(i,j) \sumjm\pi(i,j)
= \max_j\pi(i,j).
\end{align}
It follows from \eqref{cij} that 
\begin{align}\label{gog}
\frac{\pi(i,j+1)}{\pi(i,j)}
=\frac{c(i,j+1)}{c(i,j)}
=\frac{(i-j)(m-j)}{j(n-i-m+j+1)}  ,
\end{align}
and it follows easily that  the maximum in \eqref{gobi}
is attained at 
\begin{equation}
\label{eq-ws1}
j=j_0:=\lrceil{\frac{im}{n+1}}=\frac{im}{n}+O(1).
\end{equation}
It is then easy to see, by Stirling's formula and some calculations, that
for $i\le \ceil{n/2}$,
\begin{align}\label{gor}
 \max_j\pi(i,j) \le C\Bigparfrac{n}{mi}\qq.
\end{align}
Hence, by \eqref{gobi} and \eqref{gor},
\begin{align}\label{goj}
\sumin \sumjm\pi(i,j)^2 
\le 2\sum_{i=1}^{\ceil{n/2}} \max_j\pi(i,j)
\le C \sum_{i=1}^n \frac{n\qq}{m\qq i\qq}  
\le C  \frac{n}{m\qq}.  
\end{align}
The result \eqref{lrandom} for the expectation follows by
\eqref{willy}, \eqref{goa} and \eqref{goj}.

Next, we estimate the variance of $\gss_1(W)$.
Let
\begin{align}\label{Yx}
Y:=  \gss_1(W)\bigm/\binom{n-1}{m-1}^2
=\sumjm\sumkn\sumin\pi(i,j)\pi(i,k)\rho(W_j,W_k)
\end{align}
and note that, by \eqref{lrandom},
\begin{align}\label{EY}
  \E Y = \Theta\Bigparfrac{n}{m\qq}.
\end{align}
Since the random letters $W_j$ are independent,
it follows from \eqref{willu} that
the random variables $\rho(W_j,W_k)$, $j\le k$, have
covariances 0; furthermore, these variables are bounded.
Hence, \eqref{Yx} implies
\begin{align}
  \Var Y& 
\le C\sumjm\sumkm \Bigpar{\sumin\pi(i,j)\pi(i,k)}^2\label{vary}
.\end{align}
To estimate \eqref{vary}, we split  the inner sum
into the ranges $i\le \ceil{n/2}$ and $i>\ceil{n/2}$, using $(x+y)^2\le
2(x^2+y^2)$; by symmetry it suffices to consider the case $i\le\ceil{n/2}$.
It follows from \eqref{gog} after some calculations that then
\begin{align}
  \pi(i,j)
& \le 
C e^{-C(j-j_0)^2/(j+j_0)}\pi(i,j_0)
\le C j_0\qqw e^{-C(j-j_0)^2/(j+j_0)}
\notag\\&
\le C j\qqw e^{-C(j-j_0)^2/(j+j_0)}
\end{align}
where $j_0$ is defined in (\ref{eq-ws1}).
It follows, omitting the details, that for $1\le j\le k\le m$,
\begin{align}
  \sum_{i=1}^{\ceil{n/2}}\pi(i,j)\pi(i,k)
\le C  \frac{n}{m k\qq} e^{-C(j-k)^2/m}
\end{align}
and thus \eqref{vary} yields, using \eqref{EY},
\begin{align} 
 \Var Y&
\le C \sumkm \sumjm \frac{n^2}{m^2 k} e^{-C(j-k)^2/m}
\le C \sumkm  \frac{n^2}{m^{3/2} k}
\le C  \frac{n^2}{m^{3/2}}\log m
\notag\\&
\le C \frac{\log m}{m\qq}(\E Y)^2.
\end{align}
Consequently, as \mtoo,
\begin{align}
  \Var\Bigparfrac{\gss_1(W)}{\E[\gss_1(W)]}
=\Var\Bigparfrac{Y}{\E Y} 
\le C \frac{\log m}{m\qq}\to0,
\end{align}
and \eqref{lprandom} follows.
\end{proof}

\section{Concluding Remarks}

Finally, we collect here some further comments, examples and conjectures,
in the hope of stimulating further research.

\begin{example}\label{Eaaa2}
Consider again the case when $w=a^m=a\dotsm a$ is a constant string, treated by a
direct method in \refS{Saaa} and \refT{TLN}.
Let us see what \refT{T1} yields.
In this case, by \eqref{cec},
with $c_a:=p_a\qw-1>0$,
\begin{align}\label{and}
  \gssw=\sumin\tauu_i = nc_a\binom{n-1}{m-1}^2 ,
\end{align}
and thus \eqref{t1b} reduces to $m^2=o(n)$.
(This also follows by \refL{Lk}.)

Consequently, \refT{T1} applies and shows asymptotic normality when
$m=o\bigpar{n\qq}$, which we already knew, see
Theorems \ref{TLN}\ref{TLN3} and \ref{Tka}.
This example shows that Theorems \ref{T1} and \ref{Tka} are sharp,
in the sense that  the range of $m$ for which they yield asymptotic normality
cannot be extended; see \refE{Eaaa}.
\end{example}

\begin{remark}\label{Raaf}
The argument in the proof of \refT{Tka} applies
also in other cases  where
$\gss_1$ is of the same order as the upper bound in \eqref{lol}.
Then \refT{T1} applies and shows asymptotic normality for $m=o\bigpar{n\qq}$.
A simple example is when
$w=0\dotsm01\dotsm1$, or more generally, when, say, the first and second
half of $w$ have different distributions of the letters, even if the average
proportions in the entire string $\vq=\vp$.
(This can be seen by a modification of the argument in the proof of \refL{Lk}.)
\end{remark}

Based on these examples
we conjecture  the following.

\begin{conjecture}\label{Conj1}
If\/ $\gss_1=o\bigpar{\binom{n}{m}^2}$,
or equivalently
$\gss_1=o\bigpar{\frac{n^2}{m^2}\binom{n-1}{m-1}^2}$,
then
\begin{align}\label{conj1}
  \Z/\E\Z\sim
\ASN\Bigpar{1,\frac{\gss_1}{\binom{n}{m}^2}}.
\end{align}
Moreover, at least as long as $m=o(n)$,
\begin{align}\label{conj1b}
\ln  \Z\sim
\ASN\Bigpar{a_n,\frac{\gss_1}{\binom{n}{m}^2}}
\end{align}
for some sequence $a_n$.
\end{conjecture}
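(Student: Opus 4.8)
The plan is to prove the stronger log-normal statement \eqref{conj1b} and then read off \eqref{conj1} from it. Under the hypothesis $\gss_1=o\bigpar{\binom nm^2}$ the asymptotic variance $b_n:=\gss_1/\binom nm^2$ tends to $0$, so \refL{Lasn}, applied to $X_n=\Z/\E\Z$, turns \eqref{conj1b} into \eqref{conj1} (and conversely); here the centering in \eqref{conj1b} may be taken to be $a_n=\ln\E\Z$, since when $b_n\to0$ the difference between $\ln\E\Z$ and $\E[\ln\Z]$ is $O(b_n)=o\bigpar{b_n\qq}$. Thus \eqref{conj1b} carries all the content, the only freedom being the choice of $a_n$; in the genuine log-normal regime $b_n\to b>0$ this centering must be tracked to precision $o(1)$, since $\ln\E\Z$ and $\E[\ln\Z]$ then differ by $\Theta(1)$.

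The target is the linearization
\begin{align}\label{plan-lin}
  \ln\Z-a_n=\frac{V_1}{\binom nm}+\op\bigpar{1+b_n\qq},
\end{align}
where $V_1=\sumin\VVi$ is the first-order Hoeffding term from \eqref{Nb1}. Its right-hand side is asymptotically normal by \refL{LV} (valid since $m=o(n)$): $V_1\big/\binom nm\sim\ASN\bigpar{0,\gss_1/\binom nm^2}=\ASN(0,b_n)$. Granting \eqref{plan-lin}, Slutsky's theorem yields \eqref{conj1b}, because when $b_n\to0$ the remainder is $\op\bigpar{b_n\qq}$ and divides out, whereas when $b_n\to b>0$ it is $\op(1)$ and does not perturb the Gaussian limit. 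That \eqref{plan-lin} is the correct statement is confirmed by the constant pattern $w=a^m$: there $\VVi=\binom{n-1}{m-1}\gf_a(\xi_i)$, so $V_1/\binom nm=\frac mn\pa\qw Y$ with $Y=\Na-n\pa$, which agrees to leading order with the exact expression $\ln\Z-\ln\binom{n\pa}m\approx Y\ln\frac{n\pa}{n\pa-m}$ obtained in the proof of \refT{TLN}; thus \eqref{plan-lin} holds there with $a_n=\ln\binom{n\pa}m$.

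To prove \eqref{plan-lin} I would split according to whether $R:=(\Zx-V_0)/V_0=\sum_{\ell\ge1}V_\ell/V_0$ is small. By \refL{L0} and the ratio bound \eqref{v2}, $\Var R=\sum_{\ell\ge1}\gss_\ell/\binom nm^2\to0$ at least when $m=o\bigpar{n\qq}$, so there $R\pto0$ and one may Taylor-expand $\ln(1+R)=R-\tfrac12R^2+\cdots$. Matching this with $V_1/\binom nm$ requires two things: that the nonlinear remainders $R^k$ ($k\ge2$) are $\op$ of the leading term, and --- crucially --- that the higher chaoses satisfy $\sum_{\ell\ge2}\gss_\ell=o(\gss_1)$, which is exactly the projection hypothesis \eqref{t1b} of \refT{T1}. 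With the crude variance bounds of \refL{L0} and \refL{L1} combined with the lower bound \refL{L2}, one only gets $\sum_{\ell\ge2}\gss_\ell=o(\gss_1)$ for $m=o\bigpar{n\qqq}$, recovering \refT{T2a} (and, via \refL{Lk}, \refT{Tka}). In the complementary, genuinely log-normal regime $b_n\to b>0$ the quantity $R=\Op(1)$ is not small, the expansion $\ln(1+R)\approx R$ fails precisely where it matters, and there is no closed form to replace $\Z=\binom{\Na}m$; what must be captured is that $\Z$ carries nontrivial higher-order chaos while $\ln\Z$ collapses onto the first chaos $V_1/\binom nm$.

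The main obstacle is thus twofold but of one essence: controlling the \emph{true} higher-order variances $\gss_\ell$, not merely their crude majorants $\gxx_\ell$ of \refL{L0}. The alternating example of \refT{TLlow} shows the gap can be a factor $m$ per level, so sharp, cancellation-sensitive estimates are indispensable; a bound giving $\sum_{\ell\ge2}\gss_\ell=o(\gss_1)$ throughout the range $\gss_1=o\bigpar{\binom nm^2}$ would, with \refT{T1}, settle \eqref{conj1} in the concentration regime. For the log-normal regime I would attack \eqref{plan-lin} directly by the martingale-difference method: with $\cF_k=\gs(\xi_1,\dots,\xi_k)$ and $D_k:=\E[\ln\Z\mid\cF_k]-\E[\ln\Z\mid\cF_{k-1}]$, write $\ln\Z-\E[\ln\Z]=\sumkn D_k$ and prove a martingale central limit theorem, showing each $D_k$ equals $\VV k/\binom nm$ up to a summably small error. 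Bounding $D_k-\VV k/\binom nm$ amounts to estimating the discrete second differences of $\ln\Z$ in a single coordinate $\xi_k$, and this is again where cancellation-aware control of the second-order sensitivity of $\ln\Z$ is needed. Absent such estimates the statement remains open beyond the ranges already covered by \refTs{T2a}, \ref{Tka} and \ref{TLN}.
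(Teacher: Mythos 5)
The statement you were assigned is Conjecture~\ref{Conj1}, and the first thing to say is that the paper contains no proof of it: it is offered as a conjecture, supported only by evidence --- the explicit log-normal analysis of $w=a^m$ in \refT{TLN} and \refE{Eaaa}, the sharpness discussion in \refE{Eaaa2} and \refR{Raaf}, the alternating-pattern bound \eqref{alt2} of \refT{TLlow}, and the random-pattern estimate of \refT{TLrandom}. Your proposal, correctly, does not claim a proof either; your closing verdict, that the statement remains open beyond the ranges already covered by \refTs{T2a}, \ref{Tka} and \ref{TLN}, is exactly its status in the paper. So there is no false step to point to, but equally there is no proof to compare against: you have produced an honest research plan for an open problem.

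Within that framing, your partial reductions are accurate and consistent with the paper's machinery. The passage between \eqref{conj1} and \eqref{conj1b} via \refL{Lasn} when $b_n:=\gss_1/\binom{n}{m}^2\to0$ is right (with the caveat, which you note, that \eqref{conj1b} as stated allows an arbitrary $a_n$, so deducing \eqref{conj1} needs $a_n$ within $o\bigpar{b_n\qq}$ of $\ln\E\Z$; your claim that $\ln\E\Z-\E[\ln\Z]=O(b_n)$ is itself only heuristic at this point). The identification of the Gaussian part as $V_1/\binom{n}{m}\sim\ASN(0,b_n)$ via \refL{LV}, the Vandermonde check that for $w=a^m$ one has $\VVi=\binom{n-1}{m-1}\gf_a(\xi_i)$ and hence agreement with the expansion in the proof of \refT{TLN}, and the bookkeeping that \refLs{L1} and~\ref{L2} only give $\sum_{\ell\ge2}\gss_\ell=o(\gss_1)$ for $m=o\bigpar{n\qqq}$ (improved to $m=o\bigpar{n\qq}$ under \refL{Lk}) all check out; the last item is precisely how the paper proves \refTs{T2a} and \ref{Tka}. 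Your diagnosis of the obstruction is also the right one: the conjecture's hypothesis constrains only $\gss_1$, while the paper's bounds on $\gss_\ell$ for $\ell\ge2$ are the cancellation-blind majorants $\gxx_\ell$ of \refL{L0}, and \refT{TLlow} shows these can be off by a factor of order $m$; moreover in the regime $b_n\to b>0$ the linearization $\ln(1+R)\approx R$ fails outright, which is why $\Z$ can be log-normal without being normal. The martingale-difference scheme you sketch for that regime goes beyond anything suggested in the paper and is a plausible line of attack, but, as you say, it hinges on second-order sensitivity estimates for $\ln\Z$ that neither you nor the authors currently have.
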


In particular, by \eqref{alt2}, if \refConj{Conj1} holds,
then for an alternating string
$w=0101\dotsm$, $Z$ is asymptotically normal for any $m=o(n)$.
Moreover, for random $w$ as discussed in Section~\ref{sec-random}, 
by \refT{TLrandom},
\refConj{Conj1} suggests that asymptotic normality
holds for $m=o\bigpar{n^{2/3}}$, and log-normality beyond that.

Note that
this conjecture implies that if $\gss_1$ is of a smaller order
than the upper bound in \eqref{lol} (for $n\qqq\le m\le n$, say),
then asymptotic normality holds for a
larger range of $m$ than $o\bigpar{n\qq}$,
while our proof above, on the contrary,
verifies this only in a range smaller than $m=o\bigpar{n\qq}$.

\newcommand\AMS{Amer. Math. Soc.}
\newcommand\Springer{Springer-Verlag}
\newcommand\Wiley{Wiley}

\newcommand\vol{\textbf}
\newcommand\jour{\emph}
\newcommand\book{\emph}
\newcommand\inbook{\emph}
\def\no#1#2,{\unskip#2, no. #1,} 
\newcommand\toappear{\unskip, to appear}

\newcommand\arxiv[1]{\texttt{arXiv}:#1}
\newcommand\arXiv{\arxiv}

\def\nobibitem#1\par{}


\end{document}